\newtheorem{thm}{Theorem}
\newtheorem*{thma}{Theorem A}
\newtheorem*{thmb}{Theorem B}
\newtheorem*{thmc}{Theorem C}
\newtheorem{lem}[thm]{Lemma}
\newtheorem{pro}[thm]{Proposition}
\newcommand{\C}{{\mathbb C}}
\newcommand{\inc}{\int_{\C}}
\newcommand{\re}{{\text{Re}}\,}
\begin{document}

\title{Fock-Sobolev spaces and their Carleson measures}

\author{Hong Rae Cho and Kehe Zhu}

\thanks{Cho was supported by the Korea Research Foundation Grant KRF-2009-0073976}

\address{Hong Rae Cho:
Department of Mathematics, Pusan
National University, Pusan 609-735, Korea}
\email{chohr@pusan.ac.kr}

\address{Kehe Zhu:
Department of Mathematics and Statistics,
State University of New York at Albany,
Albany, NY 12222, USA}
\email{kzhu@math.albany.edu}

\date\today
\subjclass[2000]{30H20}
\keywords{Fock space, Fock-Sobolev space, Carleson measure}

\begin{abstract}
We consider the Fock-Sobolev space $F^{p,m}$ consisting of entire functions $f$ such that
$f^{(m)}$, the $m$-th order derivative of $f$, is in the Fock space $F^p$. We show that an entire function $f$ is in
$F^{p,m}$ if and only if the function $z^mf(z)$ is in $F^p$. We also characterize the 
Carleson measures for the spaces $F^{p,m}$, establish the boundedness of the weighted 
Fock projection on appropriate $L^p$ spaces, identify the Banach dual of $F^{p,m}$,
and compute the complex interpolation space between two $F^{p,m}$ spaces.
\end{abstract}

\maketitle

\section{Introduction}

Let $\C$ be the complex plane and $dA$ be ordinary area measure on $\C$. For any
$0<p\le\infty$ let $F^p$ denote the space of entire functions $f$ such that the function
$f(z)e^{-|z|^2/2}$ is in $L^p(\C,dA)$. For $f\in F^p$ we write
$$\|f\|_p=\left[\frac p{2\pi}\inc\left|f(z)e^{-\frac12|z|^2}\right|^p\,dA(z)
\right]^{\frac 1p},\quad 0<p<\infty,$$
and
$$\|f\|_\infty=\sup_{z\in\C}|f(z)|e^{-\frac12|z|^2}.$$

The spaces $F^p$, especially $F^2$, have had a long history in mathematics and mathematical
physics and have been given a wide variety of appellations, including many combinations 
and permutations of the names Bargmann, Fischer, Fock, and Segal. See \cite{Ba,BC1,BC2,BC3,
Fi,Fo,JPR,Se1,Se2,Tu}. In this paper we are going to call them Fock spaces, for no particular
reason other than personal tradition. We refer the reader to \cite{Tu,Zhu} for more recent and
systematic treatment of Fock spaces.

To give a motivation for our study of Fock-Sobolev spaces, recall that the annihilation 
operator $A$ and the creation operator $A^*$ from the quantum theory of harmonic oscillators 
are defined by the commutation relation $[A,A^*]=I$, where $I$ is the identity operator. 
A natural representation of these operators is achieved on the Fock space $F^2$, namely,
$$Af(z)=f'(z),\qquad A^*f(z)=zf(z),\qquad f\in F^2.$$
It is easy to check that both $A$ and $A^*$, as defined above, are densely defined linear
operators on $F^2$ (unbounded though) and satisfy the commutation relation $[A,A^*]=I$.
Therefore, it is important to study the operator of multiplication by $z$ and the operator
of differentiation on the Fock space $F^2$. The commutation relation above also shows
that these two operators are intimately related. The purpose of this paper is to explore 
this relation a little bit further.

Thus for any positive integer $m$ we consider the space $F^{p,m}$ consisting of entire 
functions $f$ such that $f^{(m)}$, the $m$-th order derivative of $f$, belongs to $F^p$. 
Because of the similarity to the way the classical Sobolev spaces are defined, we are 
going to call $F^{p,m}$ Fock-Sobolev spaces. See \cite{HL,Th} for other similar 
Sobolev spaces.

The main results of the paper can be stated as follows.

\begin{thma}
Suppose $0<p\le\infty$, $m$ is a positive integer, and $f$ is an entire function on $\C$.
Then $f\in F^{p,m}$ if and only if the function $z^mf(z)$ is in $F^p$.
\end{thma}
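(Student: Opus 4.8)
The plan is to interpose a single auxiliary condition between the two statements and prove that each is equivalent to it; throughout, $C$ denotes a positive constant whose value may change from line to line, and I treat $0<p<\infty$, the case $p=\infty$ being entirely analogous with suprema replacing integrals. Let $(\star)$ be the assertion that $(1+|z|)^m|f(z)|e^{-\frac12|z|^2}$ belongs to $L^p(\C,dA)$. First a harmless reduction: writing $f=P+g$, where $P$ is the degree-$(m-1)$ Taylor polynomial of $f$ at the origin and $g$ vanishes to order $m$ there, we have $f^{(m)}=g^{(m)}$ and $z^mf=z^mP+z^mg$; since every polynomial lies in $F^p$, neither $f^{(m)}\in F^p$ nor $z^mf\in F^p$ is affected by replacing $f$ by $g$, so I may assume $f$ vanishes to order $m$ at $0$. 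The equivalence $z^mf\in F^p\iff(\star)$ is then immediate, because $|z|^m$ and $(1+|z|)^m$ are comparable on $\{|z|\ge1\}$, while on $\{|z|\le1\}$ both weights and $f$ are bounded, so the two $L^p(\C,dA)$ integrands differ by a quantity controlled on the unit disk.

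For the implication $(\star)\Rightarrow f^{(m)}\in F^p$ I would apply Cauchy's formula on the shrinking disk $D(z,\delta_z)$ with $\delta_z=1/(1+|z|)$. This yields $|f^{(m)}(z)|\le C(1+|z|)^m\max_{|\zeta-z|=\delta_z}|f(\zeta)|$, and the point of the radius $\delta_z$ is that $|\zeta|^2-|z|^2$ stays bounded on this circle, so that multiplying by $e^{-\frac12|z|^2}$ produces the polynomial factor $(1+|z|)^m$ rather than an exponential one. Replacing the maximum by an $L^p$ average via the sub-mean-value property of $|f|^p$ over $D(\zeta,\delta_z)$ and integrating in $z$, a Fubini argument closes the estimate: the averaging disk has area of order $(1+|z|)^{-2}$, which cancels the Jacobian of the set $\{z:w\in D(z,2\delta_z)\}$, and one is left with the $L^p(\C,dA)$ norm of $(1+|z|)^m|f(z)|e^{-\frac12|z|^2}$, that is, with $(\star)$.

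The reverse implication $f^{(m)}\in F^p\Rightarrow(\star)$ is where I expect the real difficulty. Using that $f$ vanishes to order $m$, Taylor's formula with integral remainder gives
$$f(z)=\frac{z^m}{(m-1)!}\int_0^1(1-t)^{m-1}f^{(m)}(tz)\,dt;$$
writing $G=|f^{(m)}|e^{-\frac12|\cdot|^2}$ and splitting $e^{-\frac12|z|^2}=e^{-\frac12|tz|^2}e^{-\frac12(1-t^2)|z|^2}$ turns this into the pointwise bound $(1+|z|)^m|f(z)|e^{-\frac12|z|^2}\le C|z|^{2m}\int_0^1(1-t)^{m-1}G(tz)\,e^{-\frac12(1-t^2)|z|^2}\,dt$, and it remains to bound the $L^p(\C,dA)$ norm of the right-hand side by a constant times $\|G\|_{L^p(\C,dA)}$, which is comparable to $\|f^{(m)}\|_p$. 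The mechanism is clear: for $t$ away from $1$ the Gaussian $e^{-\frac12(1-t^2)|z|^2}$ decays faster than any power of $|z|$, while near $t=1$ the identity $\int_0^1(1-t)^{m-1}e^{-\frac12(1-t^2)|z|^2}\,dt\sim|z|^{-2m}$ absorbs the prefactor $|z|^{2m}$, so the near-diagonal contribution is governed by the values of $G$ close to $z$. The obstacle is that this is a genuinely one-dimensional integral along the ray $0\to z$, for which the crude Minkowski estimate is too lossy—discarding the Gaussian makes it diverge both as $t\to0$ (once $p\le2$) and as $t\to1$—so one must retain the Gaussian decay and run a Schur-type kernel estimate, splitting the $t$-integral near $t=1$. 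For $1<p<\infty$ a cleaner alternative is to obtain $(\star)$ from the forward estimate by duality against $F^{p'}$.
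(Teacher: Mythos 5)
Your reduction to the auxiliary condition $(\star)$ and your proof of $(\star)\Rightarrow f^{(m)}\in F^p$ are sound: the Cauchy estimate on disks of radius $1/(1+|z|)$, combined with the sub-mean-value property of $|f|^p$ and Fubini, works uniformly for all $0<p\le\infty$, and is in fact more elementary than the paper's treatment of that direction (the paper differentiates the reproducing formula and invokes boundedness of the Fock projection for $p\ge1$, plus Lemma~\ref{4} for $p<1$).

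The gap is in the reverse direction, and it is not merely a missing computation: the mechanism you propose cannot work as stated. Your pointwise bound reduces matters to boundedness on $L^p(\C,dA)$ of the operator
$$TG(z)=|z|^{2m}\int_0^1(1-t)^{m-1}G(tz)\,e^{-\frac12(1-t^2)|z|^2}\,dt,$$
and a ``Schur-type kernel estimate'' is precisely a proof of such boundedness for \emph{arbitrary} measurable $G\ge0$. But $T$ is genuinely unbounded on $L^p(\C,dA)$ for every $p<2$, even with the Gaussian retained: test it on $G=\chi_{B(0,\varepsilon)}$. For $|z|\ge1$ one has $G(tz)=1$ exactly when $t<\varepsilon/|z|$, whence $TG(z)\ge c\,\varepsilon\,|z|^{2m-1}e^{-\frac12|z|^2}$ and $\|TG\|_{L^p}\ge c\varepsilon$, while $\|G\|_{L^p}=(\pi\varepsilon^2)^{1/p}$; the ratio blows up as $\varepsilon\to0$ whenever $p<2$. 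This is the quantitative form of the defect you half-identified: the kernel lives on a one-dimensional ray and has no two-dimensional smoothing, and the failure is concentrated at the origin. Any repair must therefore use the analyticity of $f^{(m)}$ a second time (e.g.\ the sub-mean-value property of Lemma~\ref{9}, which prevents $G=|f^{(m)}|e^{-\frac12|\cdot|^2}$ from concentrating near $0$ without paying in its global norm); and even granting that, for $0<p<1$ the one-dimensional $t$-integral cannot be handled by Minkowski or Fubini, while your only other suggestion --- duality against $F^{p'}$ --- is unavailable for $p\le1$ and is left unexecuted (it would itself require the duality theory that this paper develops only after Theorem A). The paper sidesteps all of this by integrating the reproducing formula of $f^{(m)}$ $m$ times, representing $f-f_m$ as a \emph{two-dimensional} integral of $f^{(m)}(w)e^{-|w|^2}$ against the kernel $(e^{z\overline w}-p_m(z\overline w))/\overline w^m$, whose $L^p$ mapping behavior is exactly what the key estimate (Theorem~\ref{3}) controls; that is the ingredient your ray representation cannot replace.
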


\begin{thmb}
Suppose $0<p<\infty$, $m$ is a positive integer, and $r$ is a positive radius. Then
the following two conditions are equivalent for any positive Borel measure $\mu$ on $\C$.
\begin{enumerate}
\item[(a)] There exists a positive constant $C$ such that
$$\inc\left|f(z)e^{-\frac12|z|^2}\right|^p\,d\mu(z)\le C\|f\|^p_{p,m}$$
for all $f\in F^{p,m}$, where $\|f\|_{p,m}$ is the norm of the function $z^mf(z)$ in $F^p$.
\item[(b)] There exists a positive constant $C$ such that
$$\mu(B(z,r))\le C(1+|z|)^{mp}$$
for all $z\in\C$, where $B(z,r)=\{w\in\C:|w-z|<r\}$ is the Euclidean disk centered 
at $z$ with radius $r$.
\end{enumerate}
\end{thmb}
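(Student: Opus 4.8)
The plan is to prove the two implications separately, using throughout the identification $\|f\|_{p,m}=\|z^mf\|_p$ furnished by Theorem A together with the standard machinery of Fock spaces.

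For the necessity $(a)\Rightarrow(b)$, I would test the embedding on the normalized reproducing kernels $k_a(z)=e^{z\bar a-|a|^2/2}$, which satisfy $|k_a(z)|e^{-|z|^2/2}=e^{-|z-a|^2/2}$ and hence $\|k_a\|_p=1$ for every $a\in\C$ and every $0<p<\infty$. Each $k_a$ lies in $F^{p,m}$ by Theorem A, since the Gaussian dominates any power, and an elementary inequality $|z|^{mp}\le C(|a|^{mp}+|z-a|^{mp})$ together with the integrability of $|u|^{mp}e^{-p|u|^2/2}$ yields the upper bound
$$\|k_a\|_{p,m}^p=\frac p{2\pi}\inc|z|^{mp}e^{-p|z-a|^2/2}\,dA(z)\le C(1+|a|)^{mp}.$$
Applying $(a)$ to $f=k_a$ and discarding all mass outside $B(a,r)$, where $e^{-p|z-a|^2/2}\ge e^{-pr^2/2}$, gives
$$e^{-pr^2/2}\mu(B(a,r))\le\inc\left|k_a(z)e^{-\frac12|z|^2}\right|^p\,d\mu(z)\le C\|k_a\|_{p,m}^p\le C'(1+|a|)^{mp},$$
which is exactly $(b)$.

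For the sufficiency $(b)\Rightarrow(a)$, I would start from the sub-mean-value estimate for Fock spaces: applying subharmonicity of $|f(w)e^{-\bar zw}|^p$ on $B(z,r)$ and comparing the resulting Gaussian weights, whose ratio on $B(z,r)$ equals $e^{\frac p2|z-w|^2}\le e^{pr^2/2}$, one obtains a constant $C=C(p,r)$ with
$$\left|f(z)e^{-\frac12|z|^2}\right|^p\le C\int_{B(z,r)}\left|f(w)e^{-\frac12|w|^2}\right|^p\,dA(w).$$
Integrating this against $d\mu(z)$, interchanging the order of integration by Fubini, and invoking $(b)$ produces
$$\inc\left|f(z)e^{-\frac12|z|^2}\right|^p\,d\mu(z)\le C\inc\left|f(w)e^{-\frac12|w|^2}\right|^p\mu(B(w,r))\,dA(w)\le C'\inc\left|f(w)e^{-\frac12|w|^2}\right|^p(1+|w|)^{mp}\,dA(w).$$
It then remains to dominate the last integral by $\|f\|_{p,m}^p=\frac p{2\pi}\inc\left|w^mf(w)e^{-\frac12|w|^2}\right|^p\,dA(w)$.

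The main obstacle is precisely this final comparison, because $(1+|w|)^{mp}$ behaves like $|w|^{mp}$ only away from the origin, whereas $|w|^{mp}$ degenerates at $w=0$, so the norm $\|f\|_{p,m}$ controls $f$ poorly near the origin and one cannot simply replace $(1+|w|)^m$ by $|w|^m$. I would handle the region $|w|>1$ directly, where $(1+|w|)^{mp}\le 2^{mp}|w|^{mp}$ gives a clean bound by a multiple of $\|f\|_{p,m}^p$. For the region $|w|\le1$, where $(1+|w|)^{mp}\le 2^{mp}$ and $e^{-p|w|^2/2}\le1$, I would bound the integral by $\sup_{|w|\le1}|f(w)|^p$ and invoke the maximum modulus principle: writing $f(w)=g(w)/w^m$ with $g=z^mf$ entire, on $|w|=1$ we have $|f(w)|=|g(w)|$, so that $\sup_{|w|\le1}|f|=\sup_{|w|=1}|g|$, and the standard Fock point evaluation $|g(w)|e^{-|w|^2/2}\le C\|g\|_p$ then yields $\sup_{|w|\le1}|f|\le Ce^{1/2}\|g\|_p=Ce^{1/2}\|f\|_{p,m}$. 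Both regions are thus controlled by $\|f\|_{p,m}^p$, which establishes $(a)$ and completes the proof.
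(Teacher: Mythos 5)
Your proof is correct, but it takes a genuinely different route from the paper's in both directions. For $(a)\Rightarrow(b)$, the paper tests the embedding on the functions $[e^{z\overline a}-p_m(z\overline a)]/z^m$ (essentially the reproducing kernels of $F^{2,m}$), and bounding their $F^{p,m}$-norms requires the paper's key integral estimate (Theorem~\ref{3}) together with the asymptotic argument that $1-e^{-z\overline a}p_m(z\overline a)\to1$ on $B(a,r)$ and a completion of squares; you instead test on the ordinary normalized Fock kernels $k_a(z)=e^{z\overline a-|a|^2/2}$, whose norm bound $\|k_a\|_{p,m}^p\le C(1+|a|)^{mp}$ is an elementary Gaussian integral via $|z|^{mp}\le C(|a|^{mp}+|z-a|^{mp})$ --- simpler, and it bypasses Theorem~\ref{3} entirely. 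For $(b)\Rightarrow(a)$, the paper decomposes $\C$ into lattice squares, applies the sub-mean-value inequality (its Lemma~\ref{9}) square by square, and finishes with a bounded-overlap count of the disks $B(a,r)$; you instead integrate the sub-mean-value inequality against $\mu$ and apply Fubini, which is cleaner, and then reduce matters to the comparison $\inc\left|f(w)e^{-\frac12|w|^2}\right|^p(1+|w|)^{mp}\,dA(w)\le C\|f\|_{p,m}^p$. Notably, the degeneracy of the weight $|w|^{mp}$ at the origin, which you isolate and treat explicitly (maximum modulus on $|w|\le1$ plus the point-evaluation estimate applied to $g(w)=w^mf(w)$), is precisely the point the paper glosses over when it claims one can ``easily deduce'' the pointwise bound $|f(z)e^{-\frac12|z|^2}|^p\le C(1+|z|)^{-mp}\int_{|w-z|<t}|w^mf(w)e^{-\frac12|w|^2}|^p\,dA(w)$ for all $z$, including $z$ near the origin; your integrated version of this comparison is weaker than the paper's pointwise claim and easier to justify rigorously. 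The trade-off is that the paper's kernel and lattice machinery is what it reuses elsewhere (e.g., Theorem~\ref{3} throughout, and the lattice argument for the compactness version in Theorem~\ref{11}), whereas your argument is more self-contained; it too adapts to the little-oh setting with no difficulty.
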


\begin{thmc}
Suppose $m$ is a positive integer and $L^p_m$ denote the space of Lebesgue measurable
functions $f$ on $\C$ such that $z^mf(z)e^{-|z|^2/2}$ is in $L^p(\C,dA)$. Then
\begin{enumerate}
\item[(a)] The orthogonal projection $Q_m:L^2_m\to F^{2,m}$ is a bounded projection
from $L^p_m$ onto $F^{p,m}$ when $1\le p\le\infty$.
\item[(b)] If $1\le p<\infty$ and $1/p+1/q=1$, then the Banach dual of $F^{p,m}$ can
be identified with $F^{q,m}$ under the integral pairing
$$\langle f,g\rangle=\inc f(z)\overline{g(z)}\,e^{-|z|^2}|z|^{2m}\,dA(z).$$
\item[(c)] If $0<p<1$, then the Banach dual of $F^{p,m}$ can be identified with
$F^{\infty,m}$ under the integral pairing above.
\item[(d)] If $1\le p_0<p_1\le\infty$ and $\theta\in(0,1)$, then
$$\left[F^{p_0,m},F^{p_1,m}\right]_\theta=F^{p,m},$$
where $p$ is determined by
$$\frac1p=\frac{1-\theta}{p_0}+\frac{\theta}{p_1}.$$
\end{enumerate}
\end{thmc}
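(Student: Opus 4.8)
The plan is to deduce everything from part (a) together with the isometry supplied by Theorem A. The key device is the map $Wf(z)=z^m f(z)$: by the definition of $L^p_m$ it is an isometric isomorphism of $L^p_m$ onto the ordinary Fock space $L^p(\C,e^{-p|z|^2/2}\,dA)$ for $1\le p\le\infty$, and by Theorem A it restricts to an isometric isomorphism of $F^{p,m}$ onto $F^p_m$, the closed subspace of $F^p$ of functions vanishing to order at least $m$ at the origin; on $L^2_m$ it is unitary. For part (a) I would first settle $p=2$ and then conjugate. Since $W$ carries $F^{2,m}$ onto $F^2_m$ unitarily, $W Q_m W^{-1}$ is the orthogonal projection of the Fock $L^2$-space onto $F^2_m$, and because $F^2\ominus F^2_m$ is the $m$-dimensional space $\mathcal P_{m-1}$ of polynomials of degree less than $m$, this projection equals $P-R$, where $P$ is the ordinary Fock projection and $R$ is the orthogonal projection onto $\mathcal P_{m-1}$. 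Hence $Q_m=W^{-1}(P-R)W$. The finite-rank operator $R$ is bounded on $L^p(\C,e^{-p|z|^2/2}\,dA)$ for all $p\in[1,\infty]$ by a H\"older estimate (since $|z|^ke^{-|z|^2/2}\in L^q$ for every $q$), and the boundedness of $P$ for $1\le p\le\infty$ is the classical Fock projection estimate (Schur test; see \cite{Zhu}); composing with $W^{\pm1}$ gives the boundedness of $Q_m$ on $L^p_m$, while $Q_m|_{F^{p,m}}=\mathrm{id}$ shows it projects onto $F^{p,m}$. I expect part (a) to be the main obstacle, the rest being essentially corollaries; its only real content is the projection estimate, which one may instead prove directly by a Schur test from the pointwise bound $|K_m(z,w)|\le C\,e^{\re(z\overline w)}(1+|z|)^{-m}(1+|w|)^{-m}$ on the reproducing kernel of $F^{2,m}$.

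For part (b) note that, via $f\mapsto|z|^m e^{-|z|^2/2}f$, the space $L^p_m$ is a weighted $L^p$-space whose dual is $L^q_m$ under precisely the pairing $\langle f,g\rangle$ of the statement. Given $\Phi\in(F^{p,m})^*$, the functional $\Phi\circ Q_m$ is bounded on $L^p_m$, hence equal to $\langle\,\cdot\,,g\rangle$ for some $g\in L^q_m$; using part (a) with exponent $q$ and the self-adjointness of $Q_m$ relative to $\langle\cdot,\cdot\rangle$, one replaces $g$ by $Q_m g\in F^{q,m}$ to obtain $\Phi(f)=\langle f,Q_m g\rangle$ on $F^{p,m}$. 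Conversely each $g\in F^{q,m}$ induces a bounded functional because $\langle f,g\rangle=\langle z^m f,z^m g\rangle_{F^2}$ and the $F^2$-pairing satisfies $|\langle F,G\rangle_{F^2}|\le C\|F\|_p\|G\|_q$ by H\"older, which also yields the norm equivalence.

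For part (c) the projection is unavailable, so I would argue through $W$ and the finite codimension of $F^p_m$ in $F^p$. As $F^p_m$ is the intersection of the kernels of the bounded functionals $f\mapsto f^{(j)}(0)$, $0\le j<m$, it is closed of codimension $m$, and its annihilator inside $(F^p)^*=F^\infty$ (the known duality for $0<p<1$; see \cite{Zhu}) is exactly $\mathcal P_{m-1}$. Therefore $(F^{p,m})^*\cong(F^p_m)^*\cong F^\infty/\mathcal P_{m-1}$, and the topological splitting $F^\infty=F^\infty_m\oplus\mathcal P_{m-1}$ (Taylor truncation being bounded on $F^\infty$) identifies this quotient with $F^\infty_m\cong F^{\infty,m}$; tracking the isomorphisms shows the duality is realized by $\langle\cdot,\cdot\rangle$. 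The forward boundedness again comes from the H\"older estimate together with the inclusion $F^p\subset F^1$ valid for $p<1$.

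Finally, part (d) follows from part (a) by the retract theorem for complex interpolation: $Q_m$ is a single operator that is a bounded projection on each $L^{p_j}_m$ with range $F^{p_j,m}$, so $[F^{p_0,m},F^{p_1,m}]_\theta=Q_m\big([L^{p_0}_m,L^{p_1}_m]_\theta\big)$. Since these are weighted $L^p$-spaces over one fixed measure, $[L^{p_0}_m,L^{p_1}_m]_\theta=L^p_m$ by the standard complex interpolation of $L^p$-spaces, and applying $Q_m$ returns $F^{p,m}$. The hypothesis $1\le p_0$ is exactly what guarantees $Q_m$ is bounded throughout, so nothing beyond part (a) is required.
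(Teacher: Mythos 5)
Your proposal is correct in substance but follows a genuinely different route from the paper in parts (a) and (c). For (a), the paper proves boundedness of $Q_m$ on $L^1_m$ and $L^\infty_m$ directly from the kernel estimate of its Theorem~\ref{3} and then interpolates; you instead conjugate by $Wf(z)=z^mf(z)$ and reduce to the classical boundedness of the Fock projection $P$ plus the finite-rank correction $R$. This works, and it buys real economy: Theorem~\ref{3} is never needed for part (a). One detail you should add: deriving $Q_m=W^{-1}(P-R)W$ purely from the $L^2$ unitary picture does not by itself give the identity on $L^p_m$ (for $p=\infty$ there is no density of $L^2_m\cap L^\infty_m$ to fall back on); the clean fix is to observe that it is an exact identity of integral kernels, since $P-R$ has kernel $e^{z\overline w}-p_m(z\overline w)=\frac{(z\overline w)^m}{m!}K_m(z,w)$ and conjugation by $W$ converts the factor $(z\overline w)^m w^m$ into $z^m|w|^{2m}$, reproducing formula (\ref{eq7}) verbatim. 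For (b) and (d) you and the paper do essentially the same thing: your composition with $Q_m$ in (b) even avoids the paper's Hahn--Banach step, and your ``retract theorem'' in (d) is exactly the argument the paper writes out by hand with the analytic family $G(z,\zeta)=Q_mF(z,\zeta)$.

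The one point that needs repair is part (c). For $0<p<1$ the space $F^p$ is not locally convex, so Hahn--Banach fails there, and the identification $(F^p_m)^*\cong (F^p)^*/\mathcal P_{m-1}$ that you invoke is not automatic: it requires that every bounded functional on the subspace $F^p_m$ extend boundedly to $F^p$, which is precisely the step that can break in a quasi-Banach space. You supply the needed topological splitting only for $F^\infty$; you must also note that Taylor truncation $f\mapsto f-\sum_{j<m}f^{(j)}(0)z^j/j!$ is bounded on $F^p$ itself (the coefficient functionals are bounded by the pointwise bound (\ref{eq-add}) together with Cauchy's estimates), so that $F^p=F^p_m\oplus\mathcal P_{m-1}$ topologically and functionals on $F^p_m$ extend by composing with the bounded projection. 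With that sentence added, your reduction of (c) to the known $m=0$ duality $(F^p)^*=F^\infty$ is valid, though it leans on an external result; the paper instead gives a self-contained proof, setting $g(w)=\overline{F(K_m(\,\cdot\,,w))}$ and estimating $\|K_m(\,\cdot\,,w)\|_{p,m}$ via Theorem~\ref{3}, which is what your scheme trades away.
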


Our results are based on the unweighted Fock space $F^p$. We could have started out with a
positive parameter $\lambda$ and considered the Fock spaces $F^p_\lambda$ consisting of 
entire functions $f$ such that the function $f(z)e^{-\lambda|z|^2/2}$ is in $L^p(\C,dA)$.
Everything we do in the paper can be generalized to this context. No additional ideas are
needed and no complications occur.

\section{Preliminary Estimates}

In this section we prove some preliminary estimates that will be needed in later
sections. We believe that these estimates are of some independent interest for future
research on Fock type spaces.

\begin{lem}
Suppose $s$ is real and $\sigma>0$. Then there exists a positive constant $C$ such that
$$\sum_{n=0}^\infty\left(\frac x{n+1}\right)^s\frac{x^n}{n!}\le Ce^x$$
for all $x\ge\sigma$. Furthermore, this holds for all $x\ge0$ if $s\ge0$.
\label{1}
\end{lem}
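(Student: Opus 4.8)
The plan is to rewrite the inequality in a scale-free form and then split into the cases $s=0$, $s>0$, and $s<0$. Dividing by $e^x$ and recognizing $e^{-x}x^n/n!$ as the Poisson weights $w_n(x)$, which are nonnegative and sum to $1$, the claim becomes that $T(x):=\sum_{n=0}^\infty \left(\frac{x}{n+1}\right)^s w_n(x)$ is bounded on the relevant range. The case $s=0$ is immediate, since then $T\equiv 1$.

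For $s>0$ I would use the Gamma-function identity $(n+1)^{-s}=\Gamma(s)^{-1}\int_0^\infty t^{s-1}e^{-(n+1)t}\,dt$ and interchange the (all positive) sum and integral by Tonelli. This collapses the series into $\sum_n (xe^{-t})^n/n!=e^{xe^{-t}}$ and yields $T(x)=\frac{x^s}{\Gamma(s)}\int_0^\infty t^{s-1}e^{-t}e^{x(e^{-t}-1)}\,dt$. I would then split the integral at $t=1$. On $[0,1]$ the elementary bound $e^{-t}-1\le -t/2$ bounds the contribution by $\frac{x^s}{\Gamma(s)}\int_0^\infty t^{s-1}e^{-xt/2}\,dt=2^s$, while on $[1,\infty)$ the bound $e^{-t}-1\le -(1-e^{-1})$ bounds it by $x^s e^{-(1-e^{-1})x}$, which is bounded on all of $[0,\infty)$. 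This proves the estimate for every $x\ge 0$, matching the ``furthermore'' clause.

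The case $s<0$ is where the real work lies, and I expect it to be the main obstacle: writing $s=-u$ with $u>0$, the factor $\left(\frac{x}{n+1}\right)^s=(n+1)^u/x^u$ now grows in $n$, so one must show that the large-$n$ terms (far out on the Poisson tail) contribute no more than the correct power $x^u$, not a larger one. I would first settle integer $u=k$: an induction using the operator $x\,\frac{d}{dx}$ shows $\sum_n n^j x^n/n!=e^x P_j(x)$ with $P_j$ a polynomial of degree $j$ and leading coefficient $1$ (the inductive step gives $P_{j+1}=x(P_j+P_j')$), whence $\sum_n (n+1)^k x^n/n!=e^x Q_k(x)$ for a degree-$k$ polynomial $Q_k$ with leading coefficient $1$. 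Since $Q_k(x)/x^k\to 1$, we get $Q_k(x)\le Cx^k$ on $[\sigma,\infty)$ and hence $T(x)=x^{-k}Q_k(x)\le C$ there.

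For non-integer $u$ I would reduce to this case by the power-mean (Lyapunov) inequality applied to $n\mapsto n+1$ against the probability weights $w_n(x)$: with $k=\lceil u\rceil$, one has $\sum_n (n+1)^u w_n(x)\le\bigl(\sum_n(n+1)^k w_n(x)\bigr)^{u/k}=Q_k(x)^{u/k}\le Cx^u$ on $[\sigma,\infty)$, so $T(x)=x^{-u}\sum_n(n+1)^u w_n(x)\le C$. The restriction to $x\ge\sigma$ enters precisely here, through the bound $Q_k(x)\le Cx^k$, which fails uniformly as $x\to 0^+$; this explains why the unrestricted statement can only survive for $s\ge 0$.
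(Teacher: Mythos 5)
Your proof is correct, but it takes a genuinely different route from the paper's. The paper handles all real $s$ with a single combinatorial device: it brackets $(x/(n+1))^s$ between the integer powers $(x/(n+1))^{[s]}$ and $(x/(n+1))^{[s]+1}$, splitting the sum at the index $N$ with $N<x\le N+1$ to decide which bound applies where, and then absorbs each integer power of $x/(n+1)$ into a shifted factorial, so that every piece becomes a tail of the series for $e^x$. You instead split by the sign of $s$ and use two structurally different mechanisms: for $s>0$, the representation $(n+1)^{-s}=\Gamma(s)^{-1}\int_0^\infty t^{s-1}e^{-(n+1)t}\,dt$ collapses the sum to $\frac{x^s}{\Gamma(s)}\int_0^\infty t^{s-1}e^{-t}e^{x(e^{-t}-1)}\,dt$, estimated by a Laplace-type splitting at $t=1$; for $s<0$ you compute the Poisson moments exactly via the operator $x\,d/dx$ (your polynomials $Q_k$ of degree $k$ with leading coefficient $1$) and pass to non-integer exponents by Jensen's inequality for the concave power $t\mapsto t^{u/k}$. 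I checked the key steps — the bound $e^{-t}-1\le -t/2$ on $[0,1]$, the tail bound $\int_1^\infty t^{s-1}e^{-t}\,dt\le\Gamma(s)$, the recursion $P_{j+1}=x(P_j+P_j')$, and the Jensen step — and they all hold; moreover your remark that $Q_k(0)>0$ correctly pinpoints why the restriction $x\ge\sigma$ is unavoidable when $s<0$. As for what each approach buys: the paper's index-shift argument is more elementary (pure series manipulation) and treats all $s$ uniformly, while yours yields sharper information — an explicit constant $2^s+\sup_{x\ge0}x^se^{-(1-e^{-1})x}$ when $s>0$, and the exact moment asymptotics $Q_k(x)\sim x^k$ when $s<0$ — and the probabilistic reading (boundedness of the Poisson expectation of $(x/(n+1))^s$) makes the companion lower bound in the paper's next lemma equally transparent.
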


\begin{proof}
Let $[s]$ denote the largest integer less than or equal to $s$. Then $[s]\le s<[s]+1$.
For any $x>1$ let $N=N(x)$ denote the integer such that $N<x\le N+1$. Thus for any $x>1$
we have
\begin{eqnarray*}
\sum_{n=0}^\infty\left(\frac x{n+1}\right)^s\frac{x^n}{n!}&=&\sum_{n=0}^{N-1}\left(
\frac x{n+1}\right)^s\frac{x^n}{n!}+\sum_{n=N}^\infty\left(\frac x{n+1}\right)^s
\frac{x^n}{n!}\\
&\le&\sum_{n=0}^{N-1}\left(\frac x{n+1}\right)^{[s]+1}\frac{x^n}{n!}+
\sum_{n=N}^\infty\left(\frac x{n+1}\right)^{[s]}\frac{x^n}{n!}.
\end{eqnarray*}

If $[s]\ge0$ and $x>1$, it is clear that there exists a positive constant $C$ such that
$$\sum_{n=0}^\infty\left(\frac x{n+1}\right)^s\frac{x^n}{n!}\le C\left[\sum_{n=0}^{N-1}
\frac{x^{n+[s]+1}}{(n+[s]+1)!}+\sum_{n=N}^\infty\frac{x^{n+[s]}}{(n+[s])!}\right]
\le 2C\,e^x.$$
It is easy to see that the estimate above holds for $0\le x\le1$ as well if $s\ge0$, 
with a possibly different constant $C$ of course.

If $[s]<0$ and $x>1$, we can find positive constants $C_k$ such that
\begin{eqnarray*}
\sum_{n=0}^\infty\left(\frac x{n+1}\right)^s\frac{x^n}{n!}&\le&\sum_{n=0}^\infty\left(
\frac x{n+1}\right)^{[s]+1}\frac{x^n}{n!}+\sum_{n=0}^\infty\left(
\frac x{n+1}\right)^{[s]}\frac{x^n}{n!}\\
&\le&C_1+\!\!\sum_{n=-[s]-1}^\infty\left(\frac x{n+1}\right)^{[s]+1}\frac{x^n}{n!}
+\!\!\sum_{n=-[s]}^\infty\left(\frac x{n+1}\right)^{[s]}\frac{x^n}{n!}\\
&\le&C_1+C_2\left[\sum_{n=-[s]-1}^\infty\frac{x^{n+[s]+1}}{(n+[s]+1)!}+\sum_{n=-[s]}^\infty\frac{x^{n+[s]}}{(n+[s])!}\right]\\
&=&C_1+2C_2e^x\le C_3e^x.
\end{eqnarray*}
In the case $\sigma<1$, the desired estimate is obvious for $\sigma\le x\le1$ as both 
sides are strictly positive continuous functions.
\end{proof}

We show that the estimate in the lemma above can be reversed.

\begin{lem}
Suppose $s$ is real and $\sigma>0$. Then there exists a positive constant $C$ such that
$$\sum_{n=0}^\infty\left(\frac x{n+1}\right)^s\frac{x^n}{n!}\ge Ce^x$$
for all $x\ge\sigma$. Furthermore, the estimate above holds for all $x\ge0$ if $s\le0$.
\label{2}
\end{lem}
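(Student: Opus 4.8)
The plan is to show that the series in the statement is bounded below by a constant multiple of $e^x$, which together with Lemma \ref{1} yields the two-sided estimate $\sum_{n=0}^\infty(x/(n+1))^s\,x^n/n!\asymp e^x$. The guiding principle is that the mass of $e^x=\sum_n x^n/n!$ concentrates on the indices $n$ near $x$, and there the weight $(x/(n+1))^s$ is comparable to $1$; so it should suffice to retain a block of terms around the peak on which the weight is bounded below and which already carries a fixed proportion of $e^x$.

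First I would dispose of the easy regimes. On any compact interval $[\sigma,B]$ the left-hand side is a strictly positive continuous function of $x$, hence bounded below by a positive constant, while $e^x\le e^B$; so the inequality holds there for a suitable $C$, and it remains only to treat large $x$. When $s\le0$ one gets an especially clean reduction: for $n+1\ge x$ we have $x/(n+1)\le1$ and hence $(x/(n+1))^s\ge1$, so the series dominates the upper tail $\sum_{n\ge x}x^n/n!$. For the extra claim that the bound holds for all $x\ge0$ when $s\le0$, I would note that on $0\le x\le1$ the single term $n=0$ already contributes $(x/1)^s=x^s\ge1$, which exceeds $e^{-1}e^x$; this also covers the endpoint $x=0$, where the $n=0$ term is $0^s\ge1$. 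For general real $s$ and large $x$ I would instead keep the block $W=\{\,n:\lceil x\rceil\le n\le\lceil x\rceil+\lfloor\sqrt x\rfloor\,\}$. On $W$ the ratio $x/(n+1)$ lies in a fixed compact subinterval of $(0,1]$, with left endpoint tending to $1$ as $x\to\infty$, so $(x/(n+1))^s\ge c_s>0$ uniformly, and therefore the series is at least $c_s\sum_{n\in W}x^n/n!$.

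The main obstacle is the remaining estimate $\sum_{n\in W}x^n/n!\ge c\,e^x$ (equivalently, the lower bound on the Poisson tail $\sum_{n\ge x}x^n/n!$ used in the case $s\le0$): one must show that a window of width $\sim\sqrt x$ around the peak retains a fixed fraction of $e^x$, which is precisely the concentration of the Poisson distribution about its mean. I would prove this in an elementary, self-contained way using Stirling's formula. With $N=\lceil x\rceil$ the peak term satisfies $x^N/N!\ge c'\,e^x/\sqrt x$, and since the successive ratios $a_{n+1}/a_n=x/(n+1)$ stay within a bounded factor of $1$ across $W$, each of the $\sim\sqrt x$ terms of $W$ is at least a fixed multiple of $e^x/\sqrt x$; summing the $\sim\sqrt x$ of them gives the desired $c\,e^x$. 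An alternative is to invoke the incomplete-gamma representation $e^{-x}\sum_{n\ge k}x^n/n!=\gamma(k,x)/\Gamma(k)$ together with the fact that the median of a Poisson variable lies within $1$ of its mean, but the Stirling window argument keeps the proof in line with the elementary computations behind Lemma \ref{1}.
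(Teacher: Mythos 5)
Your proof is correct, but it takes a genuinely different route from the paper's. The paper argues by cases on $s$: for $0\le s\le 1$ it bounds the weight below by $1$ when $n\le N-1$ and by $x/(n+1)$ when $n\ge N$ (where $N<x\le N+1$), so the weighted series dominates $\sum_{n=0}^{N-1}x^n/n!+\sum_{n=N}^\infty x^{n+1}/(n+1)!=e^x-x^N/N!$, and Stirling's formula shows the single discarded peak term is $o(e^x)$; the cases $s>1$ and $s<0$ are then reduced to the fractional-part case $0\le s-[s]<1$ by index shifts and comparisons such as $(n+1)^{[s]}n!\le (n+[s])!$. You instead prove a Poisson concentration bound once, uniformly in $s$: a window $W$ of roughly $\sqrt x$ indices just above the peak already carries a fixed fraction of $e^x$, and on $W$ the weight $(x/(n+1))^s$ is bounded below by a constant depending only on $s$. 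Your approach buys uniformity (no case analysis, no reduction tricks) and exposes the actual mechanism behind the lemma; the paper's buys an almost computation-free argument in the model case, where only one term of the series is sacrificed. One sentence of yours needs tightening: saying the ratios $a_{n+1}/a_n=x/(n+1)$ ``stay within a bounded factor of $1$'' on $W$ is not by itself sufficient --- ratios equal to $1/2$, say, would cost a factor $2^{-\sqrt x}$ across the window. What you need, and what your choice of window width delivers, is the quantitative bound $x/(n+1)\ge x/(x+\sqrt x+2)\ge 1-2/\sqrt x$ on $W$, so that the product of at most $\sqrt x+1$ consecutive ratios is at least $(1-2/\sqrt x)^{\sqrt x+1}\ge e^{-3}$ for all large $x$. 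With that one-line fix, every term of $W$ is at least a fixed multiple of the peak term $x^{\lceil x\rceil}/\lceil x\rceil!\ge c\,e^x/\sqrt x$ (Stirling), and summing the $\sim\sqrt x$ terms of $W$ gives the desired lower bound $c'e^x$; the rest of your proposal (compact intervals by continuity, the $n=0$ term for $s\le 0$ and $0\le x\le 1$) is routine and correct.
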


\begin{proof}
First assume that $0\le s\le1$. In this case, we start with $x>1$ and let $N=N(x)$ denote the 
positive integer such that $N<x\le N+1$. Then
$$\left(\frac x{n+1}\right)^s\ge1,\qquad 0\le n\le N-1,$$
and
$$\left(\frac x{n+1}\right)^s\ge\frac x{n+1},\qquad n\ge N.$$
It follows that
\begin{eqnarray*}
\sum_{n=0}^\infty\left(\frac x{n+1}\right)^s\frac{x^n}{n!}&=&\sum_{n=0}^{N-1}\left(
\frac x{n+1}\right)^s\frac{x^n}{n!}+\sum_{n=N}^\infty\left(\frac x{n+1}\right)^s
\frac{x^n}{n!}\\
&\ge&\sum_{n=0}^{N-1}\frac{x^n}{n!}+\sum_{n=N}^\infty\frac{x^{n+1}}{(n+1)!}\\
&=&e^x-\frac{x^N}{N!}=e^x\left(1-\frac{x^Ne^{-x}}{N!}\right).
\end{eqnarray*}
By Stirling's formula,
$$\lim_{x\to\infty}\left(1-\frac{x^Ne^{-x}}{N!}\right)=1.$$
It follows easily that there exists a positive constant $C$ such that
$$\sum_{n=0}^\infty\left(\frac x{n+1}\right)^s\frac{x^n}{n!}\ge Ce^x$$
for all $x\ge\sigma$.

If $s>1$, we have
\begin{eqnarray*}
\sum_{n=0}^\infty\left(\frac x{n+1}\right)^s\frac{x^n}{n!}&=&\sum_{n=0}^\infty
\left(\frac x{n+1}\right)^{s-[s]}\frac{x^{n+[s]}}{(n+1)^{[s]}n!}\\
&\ge&\sum_{n=0}^\infty\left(\frac x{n+1+[s]}\right)^{s-[s]}\frac{x^{n+[s]}}{(n+[s])!}\\
&=&\sum_{n=0}^\infty\left(\frac x{n+1}\right)^{s-[s]}\frac{x^n}{n!}-
\sum_{n=0}^{[s]-1}\left(\frac x{n+1}\right)^{s-[s]}\frac{x^n}{n!}.
\end{eqnarray*}
Since the second sum above grows polynomially and the first sum above grows at least 
like $e^x$ (by what was proved in the previous paragraph), we can find a positive constant 
$C$ such that
$$\sum_{n=0}^\infty\left(\frac x{n+1}\right)^s\frac{x^n}{n!}\ge Ce^x$$
for all $x\ge\sigma$.

If $s<0$, we can find positive constants $C_1$ and $C_2$ such that
\begin{eqnarray*}
\sum_{n=0}^\infty\left(\frac x{n+1}\right)^s\frac{x^n}{n!}&\ge&\sum_{n=-[s]}^\infty
\left(\frac x{n+1}\right)^s\frac{x^n}{n!}\\
&=&\sum_{n=-[s]}^\infty\left(\frac x{n+1}\right)^{s-[s]}\frac{x^{n+[s]}}{(n+1)^{[s]}n!}\\
&\ge& C_1\sum_{n=-[s]}^\infty\left(\frac x{n+1+[s]}\right)^{s-[s]}\frac{x^{n+[s]}}{(n+[s])!}\\
&=&C_1\sum_{n=0}^\infty\left(\frac x{n+1}\right)^{s-[s]}\frac{x^n}{n!}\\
&\ge& C_2e^x
\end{eqnarray*}
for all $x\ge\sigma$. The last estimate above is true because $0\le s-[s]<1$. The desired 
result is obvious for $x\in[0,\sigma]$ when $s\le0$.
\end{proof}

The following is a key estimate that will be used numerous times later on.

\begin{thm}
Suppose $m$ is a nonnegative integer and $p_m(z)$ is the Taylor polynomial of $e^z$ of
order $m-1$ (with the convention that $p_0=0$). For any parameters $p>0$,
$\sigma>0$, $a>0$, and $b>-(mp+2)$ we can find a positive constant $C$ such that
$$\inc|e^{z\overline w}-p_m(z\overline w)|^pe^{-a|w|^2}|w|^b\,dA(w)
\le C|z|^b e^{\frac{p^2}{4a}|z|^2}$$
for all $|z|\ge\sigma$. Furthermore, this holds for all $z$ if $b\le pm$ as well.
\label{3}
\end{thm}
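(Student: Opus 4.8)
The plan is to prove the upper bound by reducing to a real parameter and then splitting the integral into a neighbourhood of the origin, where the two terms $e^{z\overline w}$ and $p_m(z\overline w)$ must be kept together, and the complementary region, where they may be separated and the Gaussian does the work. First I would invoke rotation invariance: replacing $w$ by $e^{i\arg z}w$ shows that the left-hand side depends on $z$ only through $x=|z|$, so it suffices to treat $z=x\ge0$, in which case $\re(z\overline w)=x\,\re w$. I would then record two pointwise estimates for the tail $e^u-p_m(u)=\sum_{n\ge m}u^n/n!$. The first, $|e^u-p_m(u)|\le(|u|^m/m!)\,e^{|u|}$ (from $(m+k)!\ge m!\,k!$), exhibits the vanishing of order $m$ at the origin and will control the region near $w=0$. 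The second is the crude splitting $|e^u-p_m(u)|\le e^{\re u}+|p_m(u)|$; the decisive point is that $|e^u|=e^{\re u}$, so this bound retains the angular information and must be used in place of the lossy $e^{|u|}$ coming from the triangle inequality on the series.

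For $|z|=x\ge\sigma$ I would split $\inc=\int_{|w|\le1}+\int_{|w|\ge1}$. On $|w|\le1$ the first bound together with $e^{px|w|}\le e^{px}$ gives a contribution $\le Cx^{mp}e^{px}\int_{|w|\le1}|w|^{mp+b}\,dA(w)$, the integral converging precisely because $b>-(mp+2)$. On $|w|\ge1$ I would use the splitting bound, so that up to a constant the contribution is dominated by $J_1=\int_{|w|\ge1}e^{px\,\re w}e^{-a|w|^2}|w|^b\,dA$ and $J_2=\int_{|w|\ge1}|p_m(zw)|^pe^{-a|w|^2}|w|^b\,dA$. The polynomial factor makes $J_2$ bounded by a fixed power of $x$. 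For $J_1$ I would complete the square, writing $px\,\re w-a|w|^2=\frac{p^2x^2}{4a}-a|w-w_0|^2$ with $w_0=px/(2a)$; the resulting shifted Gaussian concentrates near $w_0$, where $|w|^b$ is comparable to $x^b$, which produces exactly $J_1\le Cx^b e^{p^2x^2/(4a)}$. Since on $x\ge\sigma$ any fixed power of $x$ and any factor $e^{px}$ are absorbed by $x^b e^{p^2x^2/(4a)}$, all three pieces obey the required bound.

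It remains to cover $0\le x\le\sigma$ in the case $b\le pm$. Here the bulk estimate is useless near $x=0$, since separating $e^{z\overline w}$ from $p_m$ destroys the vanishing, so instead I would apply the first pointwise bound globally: $I(x)\le Cx^{mp}\int_0^\infty r^{mp+b+1}e^{p\sigma r-ar^2}\,dr=C'x^{mp}$, again finite because $b>-(mp+2)$. On this bounded range the hypothesis $b\le pm$ yields $x^{mp}\le C''x^b$, whence $I(x)\le C''x^b\le C''x^b e^{p^2x^2/(4a)}$, completing the argument.

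The main obstacle is the bulk term $J_1$: one must capture the angular cancellation encoded in $e^{\re(z\overline w)}$, for the naive bound $e^{|z\overline w|}$ overestimates by a full power of $x$, and then, having completed the square, track the weight $|w|^b$ carefully enough that it contributes the sharp $x^b$ and nothing more. The two endpoint conditions emerge from this bookkeeping: integrability at $w=0$ forces $b>-(mp+2)$, while the order-$m$ vanishing at $z=0$ is exactly what permits the global statement when $b\le pm$.
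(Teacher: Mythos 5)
Your proof is correct, but it handles the decisive term by a genuinely different mechanism than the paper. Both arguments split at $|w|=1$, keep the pointwise bound $|e^u-p_m(u)|\le(|u|^m/m!)\,e^{|u|}$ near the origin (which is exactly where $b>-(mp+2)$ enters), separate $e^{z\overline w}$ from $p_m(z\overline w)$ outside the unit disk, dispose of the polynomial piece trivially, and treat the small-$|z|$ statement under $b\le pm$ the same way (factor out $|z|^{mp}$ and use $|z|^{mp}\le C|z|^b$ on a bounded set). The difference is the bulk term $\int_{|w|\ge1}|e^{z\overline w}|^pe^{-a|w|^2}|w|^b\,dA(w)$: the paper writes $|e^{z\overline w}|^p=|e^{pz\overline w/2}|^2$, expands in a power series, integrates term by term in polar coordinates (orthogonality kills the cross terms), bounds the resulting Gamma quotients via Stirling's formula, and then invokes Lemma~\ref{1} to resum the series into $C|z|^be^{p^2|z|^2/(4a)}$. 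You instead use $|e^{z\overline w}|=e^{\re(z\overline w)}$, reduce to real $z=x$ by rotation invariance, and complete the square $px\,\re w-a|w|^2=\frac{p^2x^2}{4a}-a|w-w_0|^2$ with $w_0=px/(2a)$, reading off the bound from the concentration of the shifted Gaussian near $w_0$, where $|w|^b\sim x^b$. Your route is more elementary and self-contained, bypassing Lemma~\ref{1} and Stirling entirely, whereas the paper's route channels everything through the series estimate of Lemma~\ref{1}, which the authors set up as a reusable tool (with a matching lower bound in Lemma~\ref{2}). One step you should write out in full: the claim $\int_{|w|\ge1}e^{-a|w-w_0|^2}|w|^b\,dA(w)\le Cx^b$ for $x\ge\sigma$ requires splitting into $|w-w_0|\le w_0/2$, where $|w|$ is comparable to $w_0\sim x$ and the Gaussian integral is bounded, and its complement, where the integral is $O(e^{-aw_0^2/8})$ times at most a polynomial in $x$ and hence $\le Cx^b$ for either sign of $b$; this is routine, but it is precisely the bookkeeping your sketch defers.
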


\begin{proof}
Let $I(z)$ denote the integral in question. Then $I(z)=I_1(z)+I_2(z)$, where
$$I_1(z)=\int_{|w|\le1}|e^{z\overline w}-p_m(z\overline w)|^pe^{-a|w|^2}|w|^b\,dA(w),$$
and
$$I_2(z)=\int_{|w|\ge1}|e^{z\overline w}-p_m(z\overline w)|^pe^{-a|w|^2}|w|^b\,dA(w).$$
Now
\begin{eqnarray*}
I_1(z)&\le&\int_{|w|\le1}\left|\sum_{k=m}^\infty\frac{(z\overline w)^k}{k!}\right|^p
e^{-a|w|^2}|w|^b\,dA(w)\\
&=&|z|^{mp}\int_{|w|\le1}\left|\sum_{k=m}^\infty\frac{(z\overline w)^{k-m}}{k!}\right|^p
e^{-a|w|^2}|w|^{pm+b}\,dA(w)\\
&\le&\left[\sum_{k=m}^\infty\frac{|z|^k}{k!}\right]^p\int_{|w|\le1}e^{-a|w|^2}|w|^{pm+b}
\,dA(w)\\
&=&C[e^{|z|}-p_m(|z|)]^p.
\end{eqnarray*}
Note that integration in polor coordinates shows that the last integral above 
converges at the origin whenever $pm+b>-2$. 

On the other hand,
$$I_2(z)\le 2^p(J_1(z)+J_2(z)),$$
where
\begin{eqnarray*}
J_1(z)&=&\int_{|w|\ge1}|p_m(z\overline w)|^pe^{-a|w|^2}|w|^b\,dA(w)\\
&\le&\int_{|w|\ge1}\left[\sum_{k=0}^{m-1}\frac{|z|^k|w|^k}{k!}\right]^pe^{-a|w|^2}|w|^b\,dA(w)\\
&\le&\left[\sum_{k=0}^{m-1}\frac{|z|^k}{k!}\right]^p\int_{|w|\ge1}
e^{-a|w|^2}|w|^{p(m-1)+b}\,dA(w)\\
&=&C(p_m(|z|))^p,
\end{eqnarray*}
and
\begin{eqnarray*}
J_2(z)&=&\int_{|w|\ge1}|e^{z\overline w}|^pe^{-a|w|^2}|w|^b\,dA(w)\\
&=&\int_{|w|\ge1}|e^{pz\overline w/2}|^2e^{-a|w|^2}|w|^b\,dA(w)\\
&=&\int_{|w|\ge1}\left|\sum_{n=0}^\infty\frac{(pz\overline w/2)^n}{n!}\right|^2
e^{-a|w|^2}|w|^b\,dA(w)\\
&=&\sum_{n=0}^\infty\frac{(p|z|/2)^{2n}}{(n!)^2}\int_{|w|\ge1}|w|^{2n+b}e^{-a|w|^2}\,dA(w)\\
&=&\pi\sum_{n=0}^\infty\frac{(p|z|/2)^{2n}}{(n!)}\int_1^\infty r^{n+(b/2)}e^{-ar}\,dr\\
&=&\frac{\pi}{a^{1+(b/2)}}\sum_{n=0}^\infty\frac{(p^2|z|^2/4|a|)^n}{(n!)^2}\int_a^\infty
r^{n+(b/2)}e^{-r}\,dr.
\end{eqnarray*}
We can find a constant $C_1>0$ such that
$$\frac1{n!}\int_a^\infty r^{n+(b/2)}e^{-r}\,dr\le C_1(n+1)^{b/2}$$
for the possible few $n$ that satisfies $n+(b/2)<0$. For $n+(b/2)\ge0$ we use Stirling's 
formula to find a positive constant $C_2$ (independent of $n$) such that
\begin{eqnarray*}
\frac1{n!}\int_a^\infty r^{n+(b/2)}e^{-r}\,dr&\le&\frac1{n!}\int_0^\infty r^{n+(b/2)}
e^{-r}\,dr\\
&=&\frac{\Gamma(n+(b/2)+1)}{\Gamma(n+1)}\\
&\le& C_2(n+1)^{b/2}.
\end{eqnarray*}
Therefore, there exists a positive constant $C$ such that
$$J_2(z)\le C|z|^b\sum_{n=0}^\infty\frac{(p^2|z|^2/4a)^n}{n!}\left(
\frac{n+1}{p^2|z|^2/4a}\right)^{b/2}.$$
This along with Lemma~\ref{1} shows that we find another positive constant $C$ such that
$$J_2(z)\le C|z|^be^{\frac{p^2}{4a}|z|^2},\qquad |z|\ge\sigma.$$
Combining the estimates for $I_1(z)$, $J_1(z)$, and $J_2(z)$, we find another positive
constant $C$ such that 
$$I(z)\le C|z|^be^{\frac{p^2}{4a}|z|^2},\qquad |z|\ge\sigma.$$

Finally, if $b\le pm$ as well and $|z|\le\sigma$, we have
\begin{eqnarray*}
I(z)&=&\inc|e^{z\overline w}-p_m(z\overline w)|^pe^{-a|w|^2}|w|^b\,dA(w)\\
&=&\inc\left|\sum_{n=m}^\infty\frac{[(z/\sigma)(\sigma\overline w)]^n}{n!}\right|^p
e^{-a|w|^2}|w|^b\,dA(w)\\
&\le&\left|\frac z\sigma\right|^{pm}\inc\left|\sum_{n=m}^\infty\frac{(\sigma|w|)^n}
{n!}\right|^pe^{-a|w|^2}|w|^b\,dA(w)\\
&=&\left|\frac z\sigma\right|^b\inc\left[e^{\sigma|w|}-p_m(\sigma|w|)\right]^p
e^{-a|w|^2}|w|^b\,dA(w)\\
&\le& C|z|^be^{\frac{p^2}{4a}|z|^2}.
\end{eqnarray*}
This completes the proof of the theorem.
\end{proof}

\section{The Fock-Sobolev spaces $F^{p,m}$}

The purpose of this section is to show that, for an entire function $f$ and a positive 
integer $m$, the function $f^{(m)}(z)$ belongs to $F^p$ if and only if the function 
$z^mf(z)$ is in $F^p$.

For any $0<p\le\infty$ let $L^p_*$ denote the space of measurable functions $f$ such that
the function $f(z)e^{-|z|^2/2}$ is in $L^p(\C,dA)$. If $0<p<\infty$, the norm in $L^p_*$ is 
defined by
$$\|f\|_p^p=\frac{p}{2\pi}\inc\left|f(z)e^{-\frac12|z|^2}\right|^p\,dA(z).$$
For $p=\infty$, the norm in $L^\infty_*$ is defined by
$$\|f\|_\infty=\sup_{z\in\C}|f(z)|e^{-\frac12|z|^2}.$$

Recall that $F^2$ is a closed subspace of $L^2_*$, and the orthogonal projection
$P:L^2_*\to F^2$ is given by
$$Pf(z)=\frac1\pi\inc e^{z\overline w}f(w)e^{-|w|^2}\,dA(w).$$
It is well known that for $1\le p\le\infty$ the Fock space $F^p$ is a closed subspace
of $L^p_*$ and the Fock projection $P$ above is a bounded projection from $L^p_*$ 
onto $F^p$. See \cite{JPR} for example.

\begin{lem}
Suppose $\lambda>0$ and $0<p\le1$. There exists a positive constant $C$ such that
$$\inc|f(z)|e^{-\frac\lambda2|z|^2}\,dA(z)\le C\left[\inc\left|f(z)
e^{-\frac\lambda2|z|^2}\right|^p\,dA(z)\right]^{1/p}$$
for all entire functions $f$.
\label{4}
\end{lem}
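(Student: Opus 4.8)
The plan is to reduce this global inequality to a pointwise sub-mean-value estimate for the holomorphic function $f$, and then to exploit the hypothesis $0<p\le1$ through a one-step interpolation between the $L^\infty$ and $L^p$ scales. First I would fix a radius $r>0$ and, for each fixed $z\in\C$, introduce the entire function $g(w)=f(w)e^{-\lambda\overline z(w-z)}$, in which $\overline z$ is a constant. Since $g$ is holomorphic, $|g|^p$ is subharmonic for every $p>0$, so the sub-mean-value inequality gives $|g(z)|^p\le(\pi r^2)^{-1}\int_{B(z,r)}|g(w)|^p\,dA(w)$. Because $g(z)=f(z)$ and $|g(w)|=|f(w)|e^{-\lambda\re(\overline z(w-z))}$, multiplying through by $e^{-\frac{p\lambda}2|z|^2}$ and using the elementary identity $-\frac\lambda2|z|^2-\lambda\re(\overline z(w-z))=-\frac\lambda2|w|^2+\frac\lambda2|w-z|^2$ converts the weight $e^{-\frac{p\lambda}2|z|^2}$ into $e^{-\frac{p\lambda}2|w|^2}$ at the cost of the bounded factor $e^{\frac{p\lambda}2|w-z|^2}\le e^{\frac{p\lambda}2 r^2}$. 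This yields the pointwise estimate
$$|f(z)|^pe^{-\frac{p\lambda}2|z|^2}\le C_0\int_{B(z,r)}|f(w)|^pe^{-\frac{p\lambda}2|w|^2}\,dA(w),\qquad C_0=\frac{e^{p\lambda r^2/2}}{\pi r^2},$$
with $C_0$ independent of $z$.

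Next I would extract an $L^\infty$ bound. Since the ball integral on the right is dominated by the integral over all of $\C$, the estimate above shows at once that
$$\sup_{z\in\C}|f(z)|^pe^{-\frac{p\lambda}2|z|^2}\le C_0\inc|f(w)|^pe^{-\frac{p\lambda}2|w|^2}\,dA(w),$$
so that $M:=\sup_{z\in\C}|f(z)|e^{-\frac\lambda2|z|^2}\le C_0^{1/p}\left[\inc|f(w)|^pe^{-\frac{p\lambda}2|w|^2}\,dA(w)\right]^{1/p}$; if the right-hand side is infinite there is nothing to prove. This is the step that is genuinely special to entire functions, as no such domination of the sup norm by a small-exponent integral can hold for arbitrary measurable $f$.

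Finally I would close the argument using $0<p\le1$. Writing $h(z)=|f(z)|e^{-\frac\lambda2|z|^2}$, I split $h=h^{1-p}h^{p}$ and bound the first factor by $M^{1-p}$, which is legitimate precisely because $1-p\ge0$. Thus
$$\inc h(z)\,dA(z)\le M^{1-p}\inc h(z)^p\,dA(z)=M^{1-p}\inc|f(z)|^pe^{-\frac{p\lambda}2|z|^2}\,dA(z).$$
Substituting the bound for $M$ from the previous paragraph and noting that the exponents combine as $(1-p)/p+1=1/p$ gives the claimed inequality with $C=C_0^{(1-p)/p}$. The only real obstacle is the pointwise estimate in the first paragraph; once the weight is correctly transferred from $z$ to $w$ via the holomorphic multiplier $g$, the remaining steps are routine, and the hypothesis $0<p\le1$ enters solely to guarantee that the exponent $1-p$ is nonnegative in the interpolation.
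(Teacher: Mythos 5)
Your proof is correct and takes essentially the same approach as the paper's: both arguments rest on the pointwise estimate $|f(z)|e^{-\lambda|z|^2/2}\le C\left[\inc\left|f(w)e^{-\lambda|w|^2/2}\right|^p\,dA(w)\right]^{1/p}$, followed by the factorization of $h=|f|e^{-\lambda|\cdot|^2/2}$ as $h^p\,h^{1-p}$ with the factor $h^{1-p}$ bounded by the supremum, which is exactly where $0<p\le1$ is used. The only difference is that you prove the pointwise estimate yourself, via subharmonicity of $|g|^p$ for the auxiliary function $g(w)=f(w)e^{-\lambda\overline z(w-z)}$, whereas the paper simply quotes this estimate (its inequality (\ref{eq-add})) from the references.
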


\begin{proof}
It is well known that $|f(z)|\le\|f\|_pe^{|z|^2/2}$ for all entire functions $f$
and all $z\in\C$. See \cite{JPR} or \cite{Tu}. By a simple of change of variables, this 
can easily be generalized to weighted Fock spaces with the Gaussian weight 
$e^{-\lambda|z|^2}$. More specifically, we have 
\begin{equation}
|f(z)|e^{-\lambda|z|^2/2}\le\left[\frac{p\lambda}{2\pi}\inc\left|f(z)
e^{-\frac\lambda2|z|^2}\right|^p\,dA(z)\right]^{\frac1p}
\label{eq-add}
\end{equation}
for all entire functions $f$ and all $z\in\C$. See \cite{Tu} and \cite{Zhu} for example.

The desired inequality now follows easily by writing 
$$|f(z)|e^{-\frac\lambda2|z|^2}=\left|f(z)e^{-\lambda|z|^2/2}\right|^p
\left|f(z)e^{-\lambda|z|^2/2}\right|^{1-p}$$
and estimating the factor $|f(z)e^{-\lambda|z|^2/2}|^{1-p}$ using (\ref{eq-add}).
\end{proof}

As a consequence of the lemma above, we have $F^p\subset F^1$ when $0<p\le1$. More
generally, we always have $F^p\subset F^q$ whenever $0<p\le q\le\infty$. See \cite{JPR}
and \cite{Tu} for example.

\begin{lem}
Suppose $0<p\le\infty$, $m$ is a positive integer, and $f$ is an entire function.
If the function $z^mf(z)$ is in $F^p$, then so is $f^{(m)}$.
\label{5}
\end{lem}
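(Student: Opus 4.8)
The plan is to represent $f^{(m)}$ through the Fock reproducing kernel and reduce the problem to a Gaussian convolution estimate, treating $1\le p\le\infty$ and $0<p<1$ separately. First I would observe that $f$ itself lies in $F^\infty$: since $z^mf\in F^p\subset F^\infty$, we have $|f(z)|e^{-\frac12|z|^2}\le|z|^{-m}\,|z^mf(z)|e^{-\frac12|z|^2}$, which is bounded for $|z|\ge1$ and bounded near the origin because $f$ is entire. As the Fock projection $P$ is a projection onto $F^\infty$, we get $f=Pf$, i.e. $f(z)=\frac1\pi\inc e^{z\overline w}f(w)e^{-|w|^2}\,dA(w)$. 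Because $f\in F^\infty$, the integrand and all of its $z$-derivatives are dominated, uniformly for $z$ in compact sets, by an integrable function of $w$, so I may differentiate under the integral $m$ times to obtain $f^{(m)}(z)=\frac1\pi\inc\overline w^{\,m}e^{z\overline w}f(w)e^{-|w|^2}\,dA(w)$. Writing $g(z)=z^mf(z)$ and using $|\overline w^{\,m}f(w)|=|g(w)|$ together with $\re(z\overline w)-\tfrac12|z|^2-\tfrac12|w|^2=-\tfrac12|z-w|^2$, this yields the pointwise bound
$$|f^{(m)}(z)|e^{-\frac12|z|^2}\le\frac1\pi\inc G(w)\,e^{-\frac12|z-w|^2}\,dA(w)=:H(z),\qquad G:=|g|\,e^{-\frac12|\cdot|^2}\in L^p(\C,dA).$$

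For $1\le p\le\infty$ the proof then finishes immediately: $H$ is the convolution of $G\in L^p$ with the fixed Gaussian $\phi(z)=\frac1\pi e^{-\frac12|z|^2}\in L^1$, so Young's inequality gives $\|H\|_p\le\|\phi\|_1\|G\|_p<\infty$, and $|f^{(m)}|e^{-\frac12|\cdot|^2}\le H$ forces $f^{(m)}\in F^p$.

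The main obstacle is $0<p<1$, where Young's inequality is unavailable and a Gaussian convolution need not preserve $L^p$ for generic $L^p$ data; holomorphy must enter. Here I would use the sub-mean-value estimate obtained by multiplying an entire $h$ by the \emph{entire} factor $e^{-\overline z u}$ and applying subharmonicity of $|\cdot|^p$ on $B(z,r)$: rebalancing the weight via $\tfrac12|z|^2-\re(\overline z u)+\tfrac12|u|^2=\tfrac12|z-u|^2$ (bounded on the disk) gives $|h(z)|^pe^{-\frac p2|z|^2}\le C\int_{B(z,r)}|h(u)|^pe^{-\frac p2|u|^2}\,dA(u)$. I would fix a cover of $\C$ by unit disks $B(z_j,1)$ of bounded overlap with a subordinate partition $\{Q_j\}$, and use this estimate for $g$ to bound $\int_{Q_j}G\,dA\le\pi\sup_{B(z_j,1)}G\le C\,m_j$, where $m_j:=\big(\int_{B(z_j,2)}G^p\,dA\big)^{1/p}$. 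Since $e^{-\frac12|z-w|^2}\le Ce^{-\frac14|z-z_j|^2}$ for $w\in B(z_j,1)$, this gives $H(z)\le C\sum_j m_j\,e^{-\frac14|z-z_j|^2}$.

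The decisive point is that now $0<p\le1$ works in our favour: subadditivity $(\sum a_j)^p\le\sum a_j^p$ yields $H(z)^p\le C\sum_j m_j^p\,e^{-\frac p4|z-z_j|^2}$, and integrating in $z$ (Fubini) with $\int_{\C}e^{-\frac p4|z|^2}\,dA<\infty$ and $\sum_j m_j^p\le C\inc G^p\,dA=C'\|g\|_p^p$ (bounded overlap of the doubled disks) shows $H\in L^p$, whence $f^{(m)}\in F^p$. I expect the only delicate bookkeeping to be the passage from the continuous convolution to the discrete Gaussian sum and the verification that the overlap constants are absolute; Theorem~\ref{3} offers an alternative route to the same conclusion through an atomic decomposition of $g$, should the lattice argument prove cumbersome.
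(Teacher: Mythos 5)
Your proposal is correct, and while its first half coincides with the paper's, the second half takes a genuinely different route. Like the paper, you start from the reproducing formula $f=Pf$ (your justification via $z^mf\in F^p\subset F^\infty$, hence $f\in F^\infty$, is a cleaner way of securing what the paper asserts with a one-line reference to its pointwise estimate) and differentiate under the integral to get $f^{(m)}(z)=\frac1\pi\inc e^{z\overline w}\overline w^m f(w)e^{-|w|^2}\,dA(w)$. From there the treatments diverge. For $1\le p\le\infty$ the paper simply cites the known fact that $P$ is a bounded projection of $L^p_*$ onto $F^p$; you instead extract the pointwise bound $|f^{(m)}(z)|e^{-\frac12|z|^2}\le\frac1\pi\bigl(G*e^{-\frac12|\cdot|^2}\bigr)(z)$ and invoke Young's inequality, which amounts to a self-contained proof of the same boundedness. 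For $0<p<1$ the difference is more substantial: the paper applies its Lemma~\ref{4} (the $L^p$-to-$L^1$ estimate for entire functions, with $\lambda=2$) to the \emph{entire} integrand $w\mapsto w^mf(w)e^{\overline zw}$, then uses Fubini and an explicit Gaussian integral $\inc|e^{\frac p2 w\overline z}|^2e^{-\frac p2|z|^2}\,dA(z)=\frac{2\pi}pe^{\frac p2|w|^2}$ to close the estimate; you instead use the sub-mean-value estimate (the paper's Lemma~\ref{9}, which you correctly re-derive by multiplying by the entire factor $e^{-\overline zu}$), a bounded-overlap lattice decomposition, the Gaussian tail comparison $e^{-\frac12|z-w|^2}\le Ce^{-\frac14|z-z_j|^2}$, and $p$-subadditivity. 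Your observation that plain Young's inequality fails for $p<1$ and that holomorphy must enter is exactly the right diagnosis, and your lattice argument is a standard, correct alternative; what the paper's route buys is brevity (two lines of Fubini plus one Gaussian integral, given Lemma~\ref{4}), while yours buys transparency about where analyticity is used and avoids Lemma~\ref{4} entirely, at the cost of the covering bookkeeping.
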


\begin{proof}
If the function $w^mf(w)$ is in $F^p$, then the following reproducing formula holds:
$$f(z)=\frac1\pi\inc e^{z\overline w}f(w)e^{-|w|^2}\,dA(w),\qquad z\in\C.$$
The convergence of the integral above follows from (\ref{eq-add}).
Differentiating under the integral sign $m$ times, we obtain
\begin{equation}
f^{(m)}(z)=\frac1\pi\inc e^{z\overline w}\,\overline w^mf(w)e^{-|w|^2}\,dA(w).
\label{eq1}
\end{equation}
In other words, we have $f^{(m)}=Pg$, where $g(z)=\overline z^m\,f(z)$. Since $g\in L^p_*$ 
and $P$ is a projection of $L^p_*$ onto $F^p$ for $1\le p\le\infty$, the desired result
is clear when $1\le p\le\infty$.

When $0<p<1$, we note from (\ref{eq1}) that
$$|f^{(m)}(z)|\le\frac1\pi\inc|w^mf(w)e^{\overline zw}|e^{-|w|^2}\,dA(w).$$
By Lemma~\ref{4}, there exists a positive constant $C$, independent of $f$, such that
$$|f^{(m)}(z)|^p\le C\inc\left|w^mf(w)e^{\overline zw}e^{-|w|^2}\right|^p\,dA(w).$$
It follows from this and Fubini's theorem that we can estimate the integral
$$I=\inc|f^{(m)}(z)e^{-|z|^2/2}|^p\,dA(z)$$
as follows.
\begin{eqnarray*}
I&\le&C\inc e^{-p|z|^2/2}\,dA(z)\inc|w^mf(w)|^pe^{-p|w|^2}|e^{\overline zw}|^p\,dA(w)\\
&=&C\inc|w^mf(w)|^pe^{-p|w|^2}\,dA(w)\inc|e^{\frac p2w\overline z}|^2e^{-\frac p2|z|^2}\,dA(z)\\
&=&\frac{2\pi C}p\inc|w^mf(w)|^pe^{-p|w|^2}e^{\frac p2|w|^2}\,dA(w)\\
&=&\frac{2\pi C}p\inc|w^mf(w)e^{-|w|^2/2}|^p\,dA(w).
\end{eqnarray*}
This shows that $f^{(m)}\in F^p$ and completes the proof of the lemma.
\end{proof}

\begin{lem}
Suppose $0<p\le\infty$, $m$ is a positive integer, and $f$ is an entire function. If 
the function $f^{(m)}(z)$ is in $F^p$, then so is the function $z^mf(z)$.
\label{6}
\end{lem}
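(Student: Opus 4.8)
The plan is to reconstruct $f$ from its $m$-th derivative $g=f^{(m)}\in F^p$ by an explicit integral formula and then show directly that $z^mf\in F^p$. Since $g\in F^p$, the Fock reproducing formula gives $g(z)=\frac1\pi\inc e^{z\overline w}g(w)e^{-|w|^2}\,dA(w)$. The function
$$\tilde f(z)=\frac1\pi\inc\frac{e^{z\overline w}-p_m(z\overline w)}{\overline w^m}\,g(w)e^{-|w|^2}\,dA(w)$$
is entire (the kernel vanishes to order $m$ in $\overline w$ at $w=0$, so no singularity occurs), and differentiating $m$ times under the integral sign kills $p_m$ and returns $\tilde f^{(m)}=g=f^{(m)}$. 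Hence $f-\tilde f$ is a polynomial of degree less than $m$, so $z^m(f-\tilde f)$ is a polynomial and therefore lies in every $F^p$. It thus suffices to prove that $z^m\tilde f\in F^p$.

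For the case $1\le p\le\infty$ I would rewrite
$$z^m\tilde f(z)e^{-\frac12|z|^2}=\inc H_0(z,w)\,\big(g(w)e^{-\frac12|w|^2}\big)\,dA(w),\qquad H_0(z,w)=\frac1\pi\,\frac{z^m(e^{z\overline w}-p_m(z\overline w))}{\overline w^m}\,e^{-\frac12|z|^2-\frac12|w|^2},$$
and show the integral operator with kernel $|H_0|$ is bounded on $L^p(\C,dA)$. I would do this with Schur's test using the constant test function $h\equiv1$: by Theorem~\ref{3} (with its exponent equal to $1$, $a=\tfrac12$, $b=-m$) the $w$-integral $\inc|H_0(z,w)|\,dA(w)$ is dominated by $\frac{|z|^m}\pi e^{-|z|^2/2}\cdot C|z|^{-m}e^{|z|^2/2}=C/\pi$, and by Theorem~\ref{3} again (with exponent $1$, $a=\tfrac12$, $b=m$) the $z$-integral $\inc|H_0(z,w)|\,dA(z)$ is likewise bounded by a constant. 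These are exactly the two Schur conditions, so the operator is bounded on $L^p$ for every $1\le p\le\infty$; applied to $g(w)e^{-|w|^2/2}\in L^p$ this gives $z^m\tilde f\in F^p$. The only point requiring care is that both choices $b=\pm m$ satisfy $b\le m$, so Theorem~\ref{3} applies for all $z$ and all $w$ with no exceptional disk near the origin.

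For the case $0<p<1$ I would instead use Lemma~\ref{4}. Writing $\Psi_z(w)=\big(e^{\overline zw}-p_m(\overline zw)\big)w^{-m}g(w)$, which is \emph{entire in $w$} for fixed $z$, and using $|e^{z\overline w}-p_m(z\overline w)|=|e^{\overline zw}-p_m(\overline zw)|$, I get $|z^m\tilde f(z)|\le\frac{|z|^m}\pi\inc|\Psi_z(w)|e^{-|w|^2}\,dA(w)$. Lemma~\ref{4} (with $\lambda=2$) then pulls the $p$-th power inside:
$$|z^m\tilde f(z)|^p\le C|z|^{mp}\inc\frac{|e^{\overline zw}-p_m(\overline zw)|^p}{|w|^{mp}}\,|g(w)|^p e^{-p|w|^2}\,dA(w).$$
Integrating in $z$ against $e^{-p|z|^2/2}$, applying Tonelli, and evaluating the inner $z$-integral by Theorem~\ref{3} (with exponent $p$, $a=\tfrac p2$, $b=mp$, whence the bound $C|w|^{mp}e^{\frac p2|w|^2}$) collapses everything to $C\inc|g(w)|^p e^{-\frac p2|w|^2}\,dA(w)$, a constant multiple of $\|g\|_p^p<\infty$.

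The main obstacle is the $L^p$-boundedness for $p>1$: a naive Minkowski estimate forces an $L^1$ bound on $g$ that is not available (since $F^p\not\subset F^1$ for $p>1$), so one genuinely needs the two-sided Schur test, and the real content is that Theorem~\ref{3} supplies both Schur integrals with the \emph{same} polynomial growth $|z|^{\pm m}$ that exactly cancels the factor $z^m/\overline w^m$ in the reconstruction kernel. For $0<p<1$ the corresponding obstacle, namely moving the $p$-th power inside a non-holomorphic integrand, is resolved by the conjugation identity that exhibits the kernel as the modulus of a function holomorphic in $w$, making Lemma~\ref{4} applicable.
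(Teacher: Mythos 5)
Your proof is correct, and it follows the paper's overall architecture: the same reduction of $f$ (modulo a polynomial of degree $<m$) to the integral representation with kernel $\bigl(e^{z\overline w}-p_m(z\overline w)\bigr)/\overline w^m$, the same use of Theorem~\ref{3} with $b=-m$ and $b=m$ for the range $1\le p\le\infty$, and an essentially verbatim match in the case $0<p<1$ (Lemma~\ref{4} with $\lambda=2$ after the conjugation trick that makes the integrand holomorphic in $w$, then Tonelli and Theorem~\ref{3} with $b=mp$ --- the paper performs exactly these steps, including the silent switch from $z\overline w$ to $\overline zw$). The one genuine divergence is how the intermediate exponents $1<p<\infty$ are reached: the paper proves the endpoint cases $p=1$ and $p=\infty$ separately and then invokes complex interpolation, whereas you package those same two endpoint estimates as the two Schur conditions for the kernel $H_0$ and conclude boundedness on $L^p(\C,dA)$ for all $1\le p\le\infty$ simultaneously. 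Your route is self-contained and slightly more elementary --- the Schur test with constant test functions needs only H\"older's inequality and Fubini, and it makes explicit the linearized operator whose boundedness is really at stake, something the paper's brief appeal to interpolation leaves implicit; the paper's route is shorter given the interpolation machinery it develops anyway for Theorem~\ref{15}. Your observation that both choices $b=\pm m$ satisfy $b\le pm$, so that Theorem~\ref{3} holds with no exceptional neighborhood of the origin, is exactly the point that makes the constants uniform, and your identification of the obstruction for $p>1$ (a naive Minkowski bound would require $g\in F^1$, which is unavailable) correctly explains why a two-sided argument is unavoidable.
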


\begin{proof}
Since $f^{(m)}$ is in $F^p$, we have the reproducing formula
$$f^{(m)}(z)=\frac1\pi\inc e^{z\overline w}f^{(m)}(w)e^{-|w|^2}\,dA(w),\qquad z\in\C.$$
Again, the convergence of the integral above follows from (\ref{eq-add}). Integrating from 
$0$ to $z$ repeatedly for $m$ times, we obtain
\begin{equation}
f(z)-f_m(z)=\frac1\pi\inc\frac{e^{z\overline w}-p_m(z\overline w)}{\overline w^m}
\,f^{(m)}(w)e^{-|w|^2}\,dA(w),
\label{eq2}
\end{equation}
where
$$f_m(z)=\sum_{k=0}^{m-1}\frac{f^{(k)}(0)}{k!}z^k,\quad p_m(z)=\sum_{k=0}^{m-1}
\frac{z^k}{k!}.$$

We first consider the case $p=\infty$. In this case, it follows from (\ref{eq2})
and Theorem~\ref{3} that there is a positive constant $C$ such that
\begin{eqnarray*}|f(z)-f_m(z)|&\le&\frac1\pi\|f^{(m)}\|_{\infty}\inc|e^{z\overline w}
-p_m(z\overline w)|\,e^{-|w|^2/2}|w|^{-m}\,dA(w)\\
&\le& C\|f^{(m)}\|_{\infty}|z|^{-m}e^{|z|^2/2}
\end{eqnarray*}
for all $z$. Since every polynomial belongs to $F^\infty$, we deduce from this that the 
function $z^mf(z)$ is in $F^\infty$ whenever $f^{(m)}(z)$ is in $F^\infty$.

Next we consider the case $p=1$. It follows from (\ref{eq2}) and Fubini's theorem that
\begin{eqnarray*}
&&\inc|z|^m|f(z)-f_m(z)|e^{-|z|^2/2}\,dA(z)\\
&\le&\frac1\pi\inc|z|^me^{-|z|^2/2}\,dA(z)\inc|e^{z\overline w}-p_m(z\overline w)|
|f^{(m)}(w)|e^{-|w|^2}|w|^{-m}\,dA(w)\\
&=&\frac1\pi\inc e^{-|w|^2}|f^{(m)}(w)||w|^{-m}\,dA(w)\inc|e^{z\overline w}-
p_m(z\overline w)|e^{-|z|^2/2}|z|^m\,dA(z).
\end{eqnarray*}
This together with Theorem~\ref{3} shows that there is a positive 
constant $C$ such that
$$\inc|z|^m|f(z)-f_m(z)|e^{-|z|^2/2}\,dA(z)\le C\inc|f^{(m)}(w)|e^{-|w|^2/2}\,dA(w).$$
Thus $z^mf(z)$ is in $F^1$ whenever $f^{(m)}(z)$ is in $F^1$.

With the help of complex interpolation we conclude that the function $z^mf(z)$ is in 
$F^p$ whenever the function $f^{(m)}(z)$ is in $F^p$, where $1\le p\le\infty$.

Finally, if $0<p<1$, we note from (\ref{eq2}) that
$$|f(z)-f_m(z)|\le\frac1\pi\inc\left|\frac{e^{\overline zw}-p_m(\overline zw)}{w^m}
f^{(m)}(w)e^{-|w|^2}\right|\,dA(w).$$
Again, by Lemma~\ref{4} (with $\lambda=2$), there exists a positive constant $C$ 
such that
$$|f(z)-f_m(z)|^p\le C\inc\left|\frac{e^{\overline zw}-p_m(\overline zw)}{w^m}f^{(m)}(w)
e^{-|w|^2}\right|^p\,dA(w).$$
This together with Fubini's theorem shows that the integral
$$J=\inc|z^m(f(z)-f_m(z))e^{-|z|^2/2}|^p\,dA(z)$$
satisfies the following estimates:
\begin{eqnarray*}
J&\le&C\inc|z|^{mp}e^{-\frac p2|z|^2}\,dA(z)\inc|e^{z\overline w}-p_m(z\overline w)|^p
|f^{(m)}(w)|^pe^{-p|w|^2}\frac{dA(w)}{|w|^{mp}}\\
&=&C\inc|f^{(m)}(w)|^pe^{-p|w|^2}\frac{dA(w)}{|w|^{mp}}\inc|e^{z\overline w}-
p_m(z\overline w)|^pe^{-\frac p2|z|^2}|z|^{mp}\,dA(z).
\end{eqnarray*}
By Theorem~\ref{3} there exists another positive constant $C$ such that
$$\inc|e^{z\overline w}-p_m(z\overline w)|^pe^{-\frac p2|z|^2}|z|^{mp}\,dA(z)
\le C|w|^{mp}e^{\frac p2|w|^2}$$
for all $w\in\C$. This shows that
$$J\le C\inc|f^{(m)}(w)e^{-\frac12|w|^2}|^p\,dA(w)$$
for another positive constant $C$ that is independent of $f$. Therefore, the function 
$z^m(f(z)-f_m(z))$ is in $F^p$. Since every polynomial is in $F^p$, we conclude that 
the function $z^mf(z)$ is in $F^p$. This completes the proof of the lemma.
\end{proof}

Combining Lemmas \ref{5} and \ref{6}, we have now proved the following theorem, the
main result of the section.

\begin{thm}
Suppose $0<p\le\infty$, $m$ is a positive integer, and $f$ is an entire function.
Then $f^{(m)}$ is in $F^p$ if and only if the function $z^mf(z)$ is in $F^p$.
\label{7}
\end{thm}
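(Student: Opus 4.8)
The plan is to prove Theorem~\ref{7} as an immediate consequence of the two preceding lemmas, which together establish the two implications of the stated equivalence. Specifically, Lemma~\ref{5} shows that if $z^mf(z)\in F^p$ then $f^{(m)}\in F^p$, while Lemma~\ref{6} shows the reverse, that if $f^{(m)}\in F^p$ then $z^mf(z)\in F^p$. Since both directions hold for the full range $0<p\le\infty$ and for every positive integer $m$, combining them yields precisely the biconditional asserted in the theorem. No additional work is required beyond invoking these two results.

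Conceptually, the underlying mechanism in both lemmas is the reproducing formula for the Fock space together with the key growth estimate of Theorem~\ref{3}. For the forward direction I would recall that differentiating the reproducing formula $m$ times under the integral sign converts multiplication by $\overline w^m$ inside the integrand into the $m$-th derivative outside, so that $f^{(m)}=Pg$ with $g(z)=\overline z^mf(z)$; boundedness of the Fock projection $P$ (for $1\le p\le\infty$) or a direct Fubini estimate using Lemma~\ref{4} (for $0<p<1$) then controls the $F^p$ norm. For the reverse direction, integrating the reproducing formula for $f^{(m)}$ repeatedly produces the representation (\ref{eq2}) for $f(z)-f_m(z)$, where the kernel involves $e^{z\overline w}-p_m(z\overline w)$ divided by $\overline w^m$; here Theorem~\ref{3} supplies exactly the estimate needed to bound $\inc|e^{z\overline w}-p_m(z\overline w)|^pe^{-\frac p2|z|^2}|z|^{mp}\,dA(z)$ by a constant times $|w|^{mp}e^{\frac p2|w|^2}$.

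The main obstacle, already dispatched in the lemmas, is the low-exponent regime $0<p<1$, where $F^p$ is not a Banach space and the Fock projection is not available as a bounded operator. In that range the argument must instead pass through the pointwise bound of Lemma~\ref{4}, which trades the failure of $L^p$ duality for a direct estimate of the integral by its $p$-th power, after which Fubini's theorem and Theorem~\ref{3} close the estimate. The intermediate range $1\le p\le\infty$ is handled by proving the endpoint cases $p=1$ and $p=\infty$ directly and then filling in $1<p<\infty$ by complex interpolation. Since all of these cases are fully treated in Lemmas~\ref{5} and~\ref{6}, the proof of Theorem~\ref{7} itself consists simply of citing them and observing that their conjunction is the desired equivalence.
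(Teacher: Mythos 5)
Your proposal is correct and matches the paper's own proof exactly: the paper derives Theorem~\ref{7} precisely by combining Lemmas~\ref{5} and~\ref{6}, which supply the two implications. Your summary of the underlying mechanisms (the reproducing formula plus the projection or Lemma~\ref{4} for one direction, and the representation (\ref{eq2}) with Theorem~\ref{3} and interpolation for the other) also accurately reflects how those lemmas are proved in the paper.
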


\section{Carleson Measures}

Since $F^{p,m}$ consists of entire functions $f$ such that $z^mf(z)$ is in $F^p$, it
is natural to consider the following norm on $F^{p,m}$ when $0<p<\infty$:
\begin{equation}
\|f\|^p_{p,m}=c(p,m)\inc|z^mf(z)e^{-|z|^2/2}|^p\,dA(w),
\label{eq3}
\end{equation}
where
$$c(p,m)=\frac{(p/2)^{(mp/2)+1}}{\pi\,\Gamma((mp/2)+1)}$$
is a normalizing constant so that the constant function $1$ has norm $1$ in $F^{p,m}$.
By the open mapping theorem, we have
$$\|f\|_{p,m}\sim|f(0)|+\cdots+|f^{(m-1)}(0)|+\|f^{(m)}\|_{F^p}.$$
It is more convenient for us to use the norm $\|f\|_{p,m}$ defined in (\ref{eq3}).

\begin{pro}
The Hilbert space $F^{2,m}$ possesses the following orthonormal basis:
$$e_n(z)=\sqrt{\frac{\Gamma(m+1)}{\Gamma(n+m+1)}}\,z^n=\sqrt{\frac{m!}{(n+m)!}}\,z^n,
\qquad n=0,1,2,\cdots.$$
Consequently, the reproducing kernel of $F^{2,m}$ is given by
$$K_m(z,w)=\sum_{n=0}^\infty\frac{m!}{(n+m)!}(z\overline w)^n=
\frac{m!}{(z\overline w)^m}\left[e^{z\overline w}-p_m(z\overline w)\right],$$
where $p_m$ is the Taylor polynomial of $e^z$ of order $m-1$.
\label{8}
\end{pro}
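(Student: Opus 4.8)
The plan is to work directly with the inner product that the norm (\ref{eq3}) induces on $F^{2,m}$. Since $c(2,m)=1/(\pi\,m!)$, this pairing is
$$\langle f,g\rangle_{2,m}=c(2,m)\inc z^mf(z)\,\overline{z^mg(z)}\,e^{-|z|^2}\,dA(z)
=\frac1{\pi\,m!}\inc |z|^{2m}f(z)\overline{g(z)}\,e^{-|z|^2}\,dA(z).$$
First I would test this on the monomials $z^n$ and $z^k$. Writing $z=re^{i\theta}$ and integrating in $\theta$ yields the factor $\int_0^{2\pi}e^{i(n-k)\theta}\,d\theta=2\pi\delta_{nk}$, so distinct monomials are orthogonal. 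For $n=k$ the radial integral is handled by the substitution $u=r^2$, giving
$$\|z^n\|_{2,m}^2=\frac1{\pi m!}\cdot2\pi\int_0^\infty r^{2m+2n+1}e^{-r^2}\,dr
=\frac1{m!}\,\Gamma(n+m+1)=\frac{(n+m)!}{m!}.$$
Hence $e_n=z^n/\|z^n\|_{2,m}=\sqrt{m!/(n+m)!}\,z^n$ form an orthonormal system, exactly as stated.

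To establish completeness of this system, the cleanest route is to exploit the map $U\colon f\mapsto z^mf$, which by the very definition of the norm (\ref{eq3}) is, up to the scalar factor $1/\sqrt{m!}$, an isometry of $F^{2,m}$ into $F^2$; indeed $\|f\|_{2,m}^2=\frac1{m!}\|z^mf\|_{F^2}^2$. Its range is precisely the closed subspace of those $h\in F^2$ with $h(0)=h'(0)=\cdots=h^{(m-1)}(0)=0$, that is, the closed linear span of $\{z^k:k\ge m\}$. Since the normalized monomials are well known to form an orthogonal basis of $F^2$, the vectors $\{z^k:k\ge m\}$ constitute an orthogonal basis of this range subspace; pulling back through $U^{-1}$, which sends $z^k$ to $z^{k-m}$, shows that $\{e_n:n\ge0\}$ is complete in $F^{2,m}$. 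I expect this completeness argument to be the only point requiring any real care, as everything else reduces to a direct computation.

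For the reproducing kernel formula, I would invoke the general identity $K_m(z,w)=\sum_{n}e_n(z)\overline{e_n(w)}$ valid for a reproducing kernel Hilbert space with orthonormal basis $\{e_n\}$, which here gives
$$K_m(z,w)=\sum_{n=0}^\infty\frac{m!}{(n+m)!}(z\overline w)^n.$$
To reach the closed form I would set $\zeta=z\overline w$, factor out $\zeta^{-m}$, and reindex by $k=n+m$:
$$\sum_{n=0}^\infty\frac{m!}{(n+m)!}\zeta^n=\frac{m!}{\zeta^m}\sum_{k=m}^\infty\frac{\zeta^k}{k!}
=\frac{m!}{\zeta^m}\bigl[e^\zeta-p_m(\zeta)\bigr],$$
recognizing $\sum_{k=0}^{m-1}\zeta^k/k!=p_m(\zeta)$ as the order $m-1$ Taylor polynomial of $e^\zeta$. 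This yields the claimed formula $K_m(z,w)=\frac{m!}{(z\overline w)^m}[e^{z\overline w}-p_m(z\overline w)]$, with the apparent singularity at $\zeta=0$ removable since the defining power series is entire in $\zeta$.
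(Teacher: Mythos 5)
Your proposal is correct and follows essentially the same route as the paper: polar-coordinate computation of the inner products of monomials for the first assertion, and the expansion $K_m(z,w)=\sum_n e_n(z)\overline{e_n(w)}$ summed in closed form for the second. The only point where you go beyond the paper's sketch is completeness, which the paper subsumes under ``elementary calculations''; your argument there --- pulling back the monomial basis of $F^2$ through the scaled isometry $f\mapsto z^mf$ onto the subspace of functions vanishing to order $m$ at the origin --- is valid and a clean way to settle it.
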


\begin{proof}
The first assertion follows from elementary calculations using polar coordinates. 
The second assertion follows from the fact that
$$K_m(z,w)=\sum_{n=0}^\infty e_n(z)\overline{e_n(w)},$$
where $\{e_n\}$ is any orthonormal basis of $F^{2,m}$.
\end{proof}

\begin{lem}
Suppose $p>0$, $t>0$, and $\lambda>0$. There exists a positive constant $C$ such that
$$\left|f(z)e^{-\frac\lambda2|z|^2}\right|^p\le C\int_{|w-z|<t}\left|f(w)
e^{-\frac\lambda2|w|^2}\right|^p\,dA(w)$$
for all entire functions $f$ and all $z\in\C$.
\label{9}
\end{lem}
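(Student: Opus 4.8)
The plan is to reduce this weighted pointwise estimate to the ordinary sub-mean-value inequality for subharmonic functions, the key being to absorb the Gaussian weight into a holomorphic factor. A naive attempt — applying subharmonicity directly to $|f|^p$ and then pulling $e^{-p\lambda|z|^2/2}$ out of the integral — fails, because on the disk $|w-z|<t$ the ratio of weights involves $|w|^2-|z|^2=(|w|-|z|)(|w|+|z|)$, which is unbounded as $|z|\to\infty$ even though $\big||w|-|z|\big|<t$. The remedy is to incorporate the weight holomorphically.

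First I would fix $z\in\C$ and set $F(w)=f(w)\,e^{-\lambda\overline z w}$. Since $e^{-\lambda\overline z w}$ is entire in $w$, so is $F$, and therefore $|F|^p$ is subharmonic for every $p>0$. Applying the sub-mean-value inequality on the disk $\{w:|w-z|<t\}$ gives
$$|F(z)|^p\le\frac1{\pi t^2}\int_{|w-z|<t}|F(w)|^p\,dA(w).$$
Here $|F(z)|^p=|f(z)|^pe^{-p\lambda|z|^2}$ and $|F(w)|^p=|f(w)|^pe^{-p\lambda\,\re(\overline z w)}$.

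The next step is the polarization identity $\re(\overline z w)=\tfrac12\big(|z|^2+|w|^2-|w-z|^2\big)$, which rewrites the weight as
$$e^{-p\lambda\re(\overline z w)}=e^{-\frac{p\lambda}2|z|^2}\,e^{-\frac{p\lambda}2|w|^2}\,e^{\frac{p\lambda}2|w-z|^2}.$$
Substituting this into the integral and bounding $e^{\frac{p\lambda}2|w-z|^2}\le e^{\frac{p\lambda}2 t^2}$ on the disk, the factor $e^{-\frac{p\lambda}2|z|^2}$ cancels the matching factor coming from $|F(z)|^p$ on the left, leaving exactly the asserted inequality with $C=e^{p\lambda t^2/2}/(\pi t^2)$.

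The only genuine idea here is the choice of holomorphic multiplier $e^{-\lambda\overline z w}$; once it is in place the rest is the polarization identity together with a trivial bound on $e^{\frac{p\lambda}2|w-z|^2}$ over the disk. I do not expect any real obstacle beyond recalling that $|F|^p$ is subharmonic for all $p>0$ when $F$ is entire, which follows from the subharmonicity of $\log|F|$ and the convexity and monotonicity of $t\mapsto e^{pt}$.
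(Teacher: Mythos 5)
Your proof is correct, and the constant $C=e^{p\lambda t^2/2}/(\pi t^2)$ it produces is explicit and independent of $f$ and $z$, as required. The paper itself gives no argument here — it simply cites the literature (\cite{IZ}, \cite{Zhu}) — and your argument is essentially the standard one found in those references: absorb the Gaussian weight into the entire function via the multiplier $e^{-\lambda\overline z w}$, apply the area sub-mean-value inequality to the subharmonic function $|F|^p$, and use the polarization identity $2\re(\overline z w)=|z|^2+|w|^2-|w-z|^2$ to restore the symmetric weights. Your opening remark is also a worthwhile observation: the naive route of applying subharmonicity to $|f|^p$ alone genuinely fails, since $e^{\frac{p\lambda}2(|w|^2-|z|^2)}$ is unbounded over $|w-z|<t$ as $|z|\to\infty$, and the holomorphic multiplier is precisely what repairs this.
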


\begin{proof}
This is well known. See \cite{IZ} or \cite{Zhu}.
\end{proof}

We now prove the main result of this section.

\begin{thm}
Suppose $0<p<\infty$, $m$ is a positive integer, $r$ is a positive radius, and
$\mu$ is a positive Borel measure on $\C$. Then the following two conditions are 
equivalent.
\begin{enumerate}
\item[(a)] There exists a positive constant $C$ such that
\begin{equation}
\inc\left|f(z)e^{-\frac12|z|^2}\right|^p\,d\mu(z)\le C\|f\|^p_{p,m}
\label{eq4}
\end{equation}
for all $f\in F^{p,m}$.
\item[(b)] There exists a positive constant $C$ such that
\begin{equation}
\mu(B(a,r))\le C(1+|a|)^{mp}
\label{eq5}
\end{equation}
for all $a\in\C$, where $B(a,r)=\{z\in\C:|z-a|<r\}$ is the Euclidean disk centered 
at $a$ with radius $r$.
\end{enumerate}
\label{10}
\end{thm}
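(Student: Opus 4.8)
The plan is to prove the two implications separately, throughout using the identity (\ref{eq3}) that $\|f\|_{p,m}^p$ is a fixed constant times $\int_{\C}|z^mf(z)e^{-|z|^2/2}|^p\,dA(z)$, together with the sub-mean value estimate of Lemma~\ref{9}.

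For the necessity $(a)\Rightarrow(b)$ I would test (\ref{eq4}) against the normalized reproducing kernels of the ordinary Fock space, $k_a(z)=e^{a\overline z-|a|^2/2}$. These are entire and satisfy $|k_a(z)e^{-|z|^2/2}|=e^{-|z-a|^2/2}$. A change of variable $z=a+u$ then gives $\|k_a\|_{p,m}^p=c(p,m)\int_{\C}|a+u|^{mp}e^{-p|u|^2/2}\,dA(u)\le C(1+|a|)^{mp}$, where the last step uses $|a+u|\le(1+|a|)(1+|u|)$ and the convergence of $\int_{\C}(1+|u|)^{mp}e^{-p|u|^2/2}\,dA(u)$; in particular $k_a\in F^{p,m}$. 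On the other hand, restricting the integral in (\ref{eq4}) to $B(a,r)$ and using $e^{-p|z-a|^2/2}\ge e^{-pr^2/2}$ there yields $\int_{\C}|k_ae^{-|z|^2/2}|^p\,d\mu\ge e^{-pr^2/2}\mu(B(a,r))$. Applying (\ref{eq4}) to $f=k_a$ and combining the two estimates gives exactly (\ref{eq5}).

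For the sufficiency $(b)\Rightarrow(a)$ I would fix a lattice $\{a_j\}$ so that the disks $B(a_j,r)$ cover $\C$ while the dilated disks $B(a_j,r+t)$ have bounded overlap, and set $g(z)=z^mf(z)$, so that $\|f\|_{p,m}^p\sim\|g\|_p^p$. For the indices with $|a_j|$ large, on $B(a_j,r)$ we have $|z|\sim|a_j|\sim1+|a_j|$, hence $|f(z)e^{-|z|^2/2}|^p\le C(1+|a_j|)^{-mp}|g(z)e^{-|z|^2/2}|^p$. Applying Lemma~\ref{9} to the entire function $g$, integrating against $d\mu$ over $B(a_j,r)$, and interchanging the order of integration by Fubini, the hypothesis $\mu(B(a_j,r))\le C(1+|a_j|)^{mp}$ cancels the weight $(1+|a_j|)^{-mp}$ and produces $\int_{B(a_j,r)}|fe^{-|z|^2/2}|^p\,d\mu\le C\int_{B(a_j,r+t)}|ge^{-|w|^2/2}|^p\,dA(w)$. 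Summing over $j$ and invoking the bounded overlap then bounds the entire integral by $C\|g\|_p^p\sim C\|f\|_{p,m}^p$.

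The remaining piece, and the step I expect to be the main obstacle, is the region near the origin, where $|z|^m$ degenerates and the comparison $|f|^p\sim|z|^{-mp}|g|^p$ breaks down, so the estimate cannot be transferred to $g$. There I would argue directly: the open-mapping norm equivalence $\|f\|_{p,m}\sim|f(0)|+\cdots+|f^{(m-1)}(0)|+\|f^{(m)}\|_{F^p}$ shows that point evaluation is bounded on $F^{p,m}$, so $\sup_{|z|\le R}|f(z)e^{-|z|^2/2}|\le C_R\|f\|_{p,m}$; since (\ref{eq5}) forces $\mu(B(0,R))<\infty$, the contribution of the disk $\{|z|\le R\}$ to the left side of (\ref{eq4}) is at most $C\|f\|_{p,m}^p$. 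One must only take the cutoff radius $R$ large enough (say $R>2r$) that every lattice disk $B(a_j,r)$ meeting $\{|z|>R\}$ stays away from the origin, so that the weight comparison used in the previous paragraph is valid on it.
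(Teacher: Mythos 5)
Your proposal is correct, and while the overall architecture (test functions for necessity, a covering argument with Lemma~\ref{9} for sufficiency) matches the paper's, both halves differ in ways worth recording. For (a)$\Rightarrow$(b) the paper tests (\ref{eq4}) against the Fock--Sobolev kernels $f(z)=[e^{z\overline a}-p_m(z\overline a)]/z^m$, which forces it to invoke its key estimate, Theorem~\ref{3}, to bound $\|f\|_{p,m}$, to restrict to $|a|>2r$, and to run an asymptotic argument showing $1-e^{-z\overline a}p_m(z\overline a)\to1$ on $B(a,r)$ as $a\to\infty$. Your choice of the ordinary Fock kernels replaces all of that with a direct change of variables giving $\|k_a\|_{p,m}^p\le C(1+|a|)^{mp}$ for every $a$ at once, so your necessity proof is genuinely more elementary and entirely independent of Theorem~\ref{3}; the one correction is that you should write $k_a(z)=e^{\overline a z-|a|^2/2}$ rather than $e^{a\overline z-|a|^2/2}$ so that $k_a$ is actually entire (the identity $|k_a(z)|e^{-|z|^2/2}=e^{-|z-a|^2/2}$ is unaffected). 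For (b)$\Rightarrow$(a) the two arguments are essentially the same lattice/bounded-overlap scheme, but they part ways at the origin: the paper asserts, for \emph{all} $z$, the pointwise bound $|f(z)e^{-\frac12|z|^2}|^p\le C(1+|z|)^{-mp}\int_{|w-z|<t}|w^mf(w)e^{-\frac12|w|^2}|^p\,dA(w)$, a claim which near $z=0$ (where the weight $|w|^{mp}$ degenerates) is true but by no means ``easily deduced'' --- it needs a separate Cauchy-estimate or normal-families argument that the paper omits --- whereas you sidestep the degenerate region altogether by splitting off $\{|z|\le R\}$ and using boundedness of point evaluations on $F^{p,m}$ together with $\mu(B(0,R))<\infty$. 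That is a legitimate and arguably cleaner patch; the only detail to supply is the uniform bound $\sup_{|z|\le R}|f(z)e^{-\frac12|z|^2}|\le C_R\|f\|_{p,m}$, which follows in one line from the Taylor expansion of $f$ at $0$ of order $m-1$ together with the pointwise estimate (\ref{eq-add}) applied to $f^{(m)}\in F^p$, both of which are available from the norm equivalence you cite. Your cutoff condition is also fine as stated, since for $R>2r$ the ratio $(1+|a_j|)/(|a_j|-r)$ is bounded over all lattice points with $|a_j|>R-r$, with constant depending only on $R-2r$.
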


\begin{proof}
First assume that there is a positive constant $C$ such that (\ref{eq4}) holds for
all $f\in F^{p,m}$. Taking $f=1$ shows that $\mu(S)<\infty$ for any compact set $S$.

Fix any complex number $a$ and let
$$f(z)=[e^{z\overline a}-p_m(z\overline a)]/z^m$$
in (\ref{eq4}). Then it follows from Theorem~\ref{3} that there exists another
constant $C>0$, independent of $a$, such that
$$\inc\left|\frac{e^{z\overline a}-p_m(z\overline a)}{z^m}\,e^{-\frac12|z|^2}\right|^p
\,d\mu(z)\le Ce^{\frac p2|a|^2}.$$
In particular,
$$\int_{|z-a|<r}\left|\frac{e^{z\overline a}-p_m(z\overline a)}{z^m}\,e^{-\frac12|z|^2}
\right|^p\,d\mu(z)\le Ce^{\frac p2|a|^2}.$$
If $|a|>2r$, then $|z|^m$ is comparable to $(1+|a|)^m$ for $|z-a|<r$. So there is another
positive constant $C$ such that
$$\int_{|z-a|<r}|e^{z\overline a}|^p|1-e^{-z\overline a}p_m(z\overline a)|^p
e^{-\frac p2|z|^2}\,d\mu(z)\le C(1+|a|)^{mp}e^{\frac p2|a|^2}$$
for all $|a|>2r$. It is easy to show that
$$\lim_{a\to\infty,|z-a|<r}(1-e^{-z\overline a}p_m(z\overline a))=1.$$
Thus we can find another constant $C>0$ with
$$\int_{|z-a|<r}|e^{z\overline a}|^pe^{-\frac p2|z|^2}\,d\mu(z)\le C(1+|a|)^{mp}
e^{\frac p2|a|^2}$$
for large $|a|$. But this is clearly true (with a different constant $C$) for smaller $|a|$
as well. So we can find another constant $C>0$ such that
$$\int_{|z-a|<r}e^{-\frac p2|a|^2}|e^{z\overline a}|^pe^{-\frac p2|z|^2}\,d\mu(z)
\le C(1+|a|)^{mp}$$
for all $a\in\C$. Completing a square in the exponent, we can rewrite the inequality above
as
$$\int_{|z-a|<r}e^{-\frac p2|z-a|^2}\,d\mu(z)\le C(1+|a|)^{mp},$$
from which we deduce that
$$\mu(B(a,r))\le Ce^{-\frac p2r^2}(1+|a|)^{mp}$$
for all $a\in\C$. This shows that condition (a) implies condition (b).

Next we assume that there exists a constant $C>0$ such that (\ref{eq5}) holds for all
$a\in\C$. We proceed to estimate the integral
$$I(f)=\inc\left|f(z)e^{-\frac12|z|^2}\right|^p\,d\mu(z)$$
for any function $f\in F^{p,m}$.

For any positive $s$ let $Q_s$ denote the following square in $\C$ with vertices $0$, $s$, 
$si$, and $s+si$:
$$Q_s=\{z=x+iy:0<x\le s,0<y\le s\}.$$
Let $Z$ denote the set of all integers and consider the lattice
$$Z_s^2=\{ns+ims:n\in Z,m\in Z\}.$$
It is clear that
$$\C=\bigcup\{Q_s+a:a\in Z^2_s\}$$
is decomposition of $\C$ into disjoint squares of side length $s$. Thus
$$I(f)=\sum_{a\in Z^2_s}\int_{Q_s+a}\left|f(z)e^{-\frac12|z|^2}\right|^p\,d\mu(z).$$

We fix positive numbers $s$ and $t$ such that $t+\sqrt s=r$. By Lemma~\ref{9} there
exists a constant $C$ such that
$$|f(z)|^pe^{-\frac p2|z|^2}\le C\int_{|w-z|<t}|f(w)e^{-\frac12|w|^2}|^p\,dA(w)$$
for all $z\in\C$. From this we easily deduce that
$$|f(z)e^{-\frac12|z|^2}|^p\le\frac C{(1+|z|)^{mp}}\int_{|w-z|<t}|w^mf(w)
e^{-\frac12|w|^2}|^p\,dA(w)$$
for all $z\in\C$, where $C$ is another positive constant. Now if $z\in Q_s+a$, where
$a\in Z^2_s$, then $B(z,t)\subset B(a,r)$ by the triangle inequality, and $1+|z|$ is
comparable to $1+|a|$. It follows that
$$|f(z)e^{-\frac12|z|^2}|^p\le\frac C{(1+|a|)^{mp}}\int_{|w-a|<r}|w^mf(w)
e^{-\frac12|w|^2}|^p\,dA(w),$$
where $C$ is another positive constant. Therefore,
$$I(f)\le C\sum_{a\in Z^2_s}\frac{\mu(B(a,r))}{(1+|a|)^{mp}}\int_{B(a,r)}
\left|w^mf(w)e^{-\frac12|w|^2}\right|^p\,dA(w).$$
Combining this with the assumption in (\ref{eq5}), we find another positive constant $C$
such that
$$I(f)\le C\sum_{a\in Z^2_s}\int_{B(a,r)}\left|w^mf(w)e^{-\frac12|w|^2}\right|^p
\,dA(w).$$
It is clear that there exists a positive integer $N$ such that each point in the complex
plane belongs to at most $N$ of the disks $B(a,r)$, where $a\in Z^2_s$. It follows that
$$I(f)\le CN\inc\left|w^mf(w)e^{-\frac12|w|^2}\right|^p\,dA(w).$$
Since $C$ and $N$ are independent of $f$, this shows that condition (b) implies 
condition (a). 
\end{proof}

The following is the ``little oh'' version of Theorem~\ref{10}. The proof is similar
to that of Theorem~\ref{10} and we omit the details here.

\begin{thm}
Suppose $0<p<\infty$, $m$ is a positive integer, $r$ is a positive radius, and
$\mu$ is a positive Borel measure on $\C$. Then
$$\lim_{a\to\infty}\frac{\mu(B(a,r))}{(1+|a|)^{mp}}=0$$
if and only if $F^{p,m}\subset L^p(\C,d\mu)$ and the inclusion is compact.
\label{11}
\end{thm}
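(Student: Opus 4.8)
The plan is to treat the two implications separately, after first recording the standard sequential description of compactness. Throughout I write $\iota_\mu\colon F^{p,m}\to L^p(\C,d\mu)$ for the map sending $f$ to the function $f(z)e^{-|z|^2/2}$, so that condition (a) of Theorem~\ref{10} is exactly the boundedness of $\iota_\mu$ and the assertion to be proved concerns its compactness. The first step is to establish the criterion that a bounded $\iota_\mu$ is compact if and only if $\|\iota_\mu f_k\|_{L^p(d\mu)}\to0$ for every sequence $\{f_k\}$ that is bounded in $F^{p,m}$ and converges to $0$ uniformly on compact subsets of $\C$. The nontrivial direction is a normal families argument: the pointwise bound (\ref{eq-add}) shows that point evaluations are continuous on $F^{p,m}$, so any bounded sequence has a subsequence converging uniformly on compacta to some $h\in F^{p,m}$, and the criterion then identifies the limit of the images as $\iota_\mu h$.

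For the implication that the vanishing condition forces compactness, I would verify this criterion directly. Since the vanishing condition entails the Carleson bound (\ref{eq5}) (trivially for large $|a|$, while the measures under consideration are finite on compacta), Theorem~\ref{10} shows $\iota_\mu$ is bounded and in particular $\mu$ is finite on each disk $\{|z|\le R\}$. Now given $\{f_k\}$ bounded in $F^{p,m}$ with $f_k\to0$ uniformly on compacta, fix $\epsilon>0$ and split $\|\iota_\mu f_k\|^p_{L^p(d\mu)}=\int_{|z|\le R}+\int_{|z|>R}$. For the tail I would rerun the lattice decomposition from the proof of Theorem~\ref{10}: combining the estimate of Lemma~\ref{9} in the form
$$\Bigl|f(z)e^{-\frac12|z|^2}\Bigr|^p\le\frac{C}{(1+|z|)^{mp}}\int_{|w-z|<t}\Bigl|w^mf(w)e^{-\frac12|w|^2}\Bigr|^p\,dA(w)$$
with the hypothesis $\mu(B(a,r))\le\epsilon(1+|a|)^{mp}$, valid for $|a|$ large, yields $\int_{|z|>R}|f_k(z)e^{-|z|^2/2}|^p\,d\mu\le C\epsilon\,\|f_k\|^p_{p,m}$ once $R$ is large, with $C$ independent of $k$. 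For the central part, $\mu(\{|z|\le R\})<\infty$ and $f_k\to0$ uniformly there, so $\int_{|z|\le R}|f_k(z)e^{-|z|^2/2}|^p\,d\mu\to0$ as $k\to\infty$. Hence $\limsup_k\|\iota_\mu f_k\|^p\le C\epsilon\sup_k\|f_k\|^p_{p,m}$, and letting $\epsilon\to0$ gives compactness.

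For the converse I would use explicit test functions. Set
$$g_a(z)=e^{-\frac12|a|^2}\,\frac{e^{z\overline a}-p_m(z\overline a)}{z^m},$$
which is entire because $[e^w-p_m(w)]/w^m$ is entire. Theorem~\ref{3} (applied with the exponent parameter $p/2$ and $b=0$, so that it holds for all $a$) gives $\|g_a\|_{p,m}\le C$ uniformly in $a$, while a direct power series bound shows $g_a\to0$ uniformly on compacta as $a\to\infty$. If $\iota_\mu$ is compact it is bounded, so Theorem~\ref{10} already supplies $\mu(B(a,r))\le C(1+|a|)^{mp}$; moreover the criterion forces $\|\iota_\mu g_a\|_{L^p(d\mu)}\to0$ as $a\to\infty$ (otherwise a subsequence $\iota_\mu g_{a_k}$ would converge in $L^p(d\mu)$ to some $h$; passing to a further subsequence convergent $\mu$-a.e.\ and using $g_{a_k}\to0$ pointwise forces $h=0$, a contradiction). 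On the other hand, for $z\in B(a,r)$ and $|a|$ large one has $|e^{z\overline a}-p_m(z\overline a)|\ge\frac12 e^{\re(z\overline a)}$ (the same limit computed in the proof of Theorem~\ref{10}) and $|z|^{mp}\le C(1+|a|)^{mp}$, and completing the square in the exponent gives
$$\Bigl|g_a(z)e^{-\frac12|z|^2}\Bigr|^p\ge\frac{c}{(1+|a|)^{mp}}\,e^{-\frac p2|z-a|^2}\ge\frac{c\,e^{-\frac p2r^2}}{(1+|a|)^{mp}}.$$
Integrating over $B(a,r)$ yields $\|\iota_\mu g_a\|^p_{L^p(d\mu)}\ge c'\,\mu(B(a,r))/(1+|a|)^{mp}$, so the vanishing of the left side forces the vanishing of $\mu(B(a,r))/(1+|a|)^{mp}$.

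The main obstacle is the uniform-in-$k$ tail estimate: one must make the lattice argument of Theorem~\ref{10} quantitative so that the small factor $\epsilon$ from the vanishing condition is extracted cleanly while the finite-overlap constant $N$ and the bound $\sup_k\|f_k\|_{p,m}$ remain under control. The remaining ingredients—the normal-families criterion and the test-function computation—are routine once Theorems~\ref{3} and~\ref{10} are in hand, which is presumably why the authors omit the details.
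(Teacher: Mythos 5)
Your proposal is correct and is essentially the argument the paper has in mind when it declares the proof ``similar to that of Theorem~\ref{10}'': sufficiency by making the lattice argument of Theorem~\ref{10} quantitative (the small constant $\epsilon$ extracted on the tail from the vanishing Carleson condition, with uniform convergence plus finiteness of $\mu$ on the compact part), and necessity by testing against the normalized functions $e^{-|a|^2/2}[e^{z\overline a}-p_m(z\overline a)]/z^m$, whose uniform norm bound and pointwise lower bound on $B(a,r)$ come from Theorem~\ref{3} and the limit $e^{-z\overline a}p_m(z\overline a)\to0$ exactly as in the proof of Theorem~\ref{10}. The only superfluous step is your derivation of the sequential compactness criterion: the paper takes that criterion as the definition of compact inclusion (see the remark following the theorem), so both directions can invoke it directly.
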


Note that compact inclusion in the theorem above means the following: whenever $\{f_n\}$
is a bounded sequence in $F^{p,m}$ that converges to $0$ uniformly on compact sets we have
$$\lim_{n\to\infty}\inc\left|f_n(z)e^{-\frac12|z|^2}\right|^p\,d\mu(z)=0.$$
When $1<p<\infty$, this is consistent with the definition of compact linear operators in
the theory of Banach spaces.

\section{Duality and complex interpolation}

Recall that the Fock-Sobolev space $F^{p,m}$ consists of entire functions $f$ such that 
the function $z^mf(z)e^{-|z|^2/2}$ is in $L^p(\C,dA)$. More generally, we let $L^p_m$ denote 
the space of Lebesgue measurable functions $f$ on the complex plane such that the function
$z^mf(z)e^{-|z|^2/2}$ is in $L^p(\C,dA)$. When $0<p<\infty$, we use the following
norm in $L^p_m$:
$$\|f\|^p_{p,m}=c(m,p)\inc\left|z^mf(z)e^{-|z|^2/2}\right|^p\,dA(z),$$
where $c(m,p)$ is a normalizing constant defined earlier. When $p=\infty$, we use the 
following norm in $L^\infty_m$:
$$\|f\|_{\infty,m}=\sup\{|z^mf(z)e^{-|z|^2/2}|:z\in\C\}.$$

The space $L^2_m$ is a Hilbert space and the inner product on it is given by
\begin{equation}
\langle f,g\rangle_m=\frac1{m!\pi}\inc f(z)\overline{g(z)}e^{-|z|^2}|z|^{2m}\,dA(z).
\label{eq6}
\end{equation}
The Fock-Sobolev space $F^{2,m}$ is a closed subspace of $L^2_m$, and the orthogonal
projection $Q_m:L^2_m\to F^{2,m}$ is given by
\begin{equation}
Q_mf(z)=\frac1{m!\pi}\inc f(w)K_m(z,w)e^{-|w|^2}|w|^{2m}\,dA(w),
\label{eq7}
\end{equation}
where $K_m(z,w)$ is the reproducing kernel of $F^{2,m}$ given in Proposition~\ref{8}.

\begin{thm}
For each $1\le p\le\infty$ the integral operator $Q_m$ in (\ref{eq7}) is a bounded
projection from $L^p_m$ onto $F^{p,m}$.
\label{12}
\end{thm}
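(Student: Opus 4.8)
The plan is to reduce everything to the already-established boundedness of the ordinary Fock projection $P$ on $L^p_*$. The first step is to simplify the kernel: using $K_m(z,w)=\frac{m!}{(z\overline w)^m}[e^{z\overline w}-p_m(z\overline w)]$ from Proposition~\ref{8} together with $|w|^{2m}/\overline w^m=w^m$, the operator in (\ref{eq7}) collapses to
$$Q_mf(z)=\frac1{\pi z^m}\inc\left[e^{z\overline w}-p_m(z\overline w)\right]w^mf(w)e^{-|w|^2}\,dA(w).$$
Writing $g(w)=w^mf(w)$, so that $g\in L^p_*$ with $\|g\|_{L^p_*}$ comparable to $\|f\|_{p,m}$, and expanding the polynomial $p_m$, I would record the identity
$$z^mQ_mf(z)=Pg(z)-R_mg(z),\qquad R_mg(z)=\sum_{k=0}^{m-1}\frac{z^k}{k!}\,\frac1\pi\inc\overline w^kg(w)e^{-|w|^2}\,dA(w),$$
so that $z^mQ_mf$ is simply $Pg$ with its $m$ lowest Taylor coefficients adjusted by $R_mg$.

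The conceptual heart — and the step I expect to require the most care — is to verify that the apparent pole of $Q_mf$ at the origin is removable, i.e. that $Pg-R_mg$ vanishes to order $m$ at $0$. This holds because the coefficient of $z^k$ in the entire function $Pg$ equals $\frac1{k!}\cdot\frac1\pi\inc\overline w^kg\,e^{-|w|^2}\,dA$ (justified by expanding $e^{z\overline w}$ and interchanging sum and integral, legitimate by the Gaussian decay), which for $0\le k\le m-1$ is exactly the coefficient appearing in $R_mg$. Hence the first $m$ Taylor coefficients of $Pg-R_mg$ cancel, so $Q_mf$ is entire. Since $z^mQ_mf=Pg-R_mg$ then lies in $F^p$ — being the sum of $Pg\in F^p$ and a polynomial — it follows that $Q_mf\in F^{p,m}$, establishing that the range of $Q_m$ lies in $F^{p,m}$.

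For boundedness I would estimate $\|Q_mf\|_{p,m}$, which up to a fixed constant equals $\|Pg-R_mg\|_{F^p}\le\|Pg\|_{F^p}+\|R_mg\|_{F^p}$, the triangle inequality being valid for $1\le p\le\infty$. The first term is $\le C\|g\|_{L^p_*}$ by the stated boundedness of $P$ on $L^p_*$, and $\|g\|_{L^p_*}$ is comparable to $\|f\|_{p,m}$. For the second term, each functional $g\mapsto\frac1\pi\inc\overline w^kg\,e^{-|w|^2}\,dA$ is bounded on $L^p_*$ by Hölder's inequality (pairing $|g|e^{-|w|^2/2}$ against $|w|^ke^{-|w|^2/2}$, which has finite norm in the conjugate exponent), so $\|R_mg\|_{F^p}\le C\sum_{k=0}^{m-1}\|z^k\|_{F^p}\,\|g\|_{L^p_*}\le C\|f\|_{p,m}$, since every monomial lies in $F^p$. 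Combining the two bounds gives $\|Q_mf\|_{p,m}\le C\|f\|_{p,m}$ uniformly in $f$.

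Finally, to see that $Q_m$ is a projection \emph{onto} $F^{p,m}$, I would check that it fixes $F^{p,m}$. If $f\in F^{p,m}$ then $g=w^mf\in F^p$ and $g$ vanishes to order $m$ at the origin, so $Pg=g$ (because $P$ reproduces $F^p$), while the lowest $m$ Taylor coefficients of $g$ — equivalently the moments defining $R_mg$ — all vanish; thus $R_mg=0$ and $z^mQ_mf=Pg=g=z^mf$, giving $Q_mf=f$. Together with the range inclusion proved above, this shows that $Q_m$ is a bounded projection from $L^p_m$ onto $F^{p,m}$ for every $1\le p\le\infty$.
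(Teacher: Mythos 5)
Your proof is correct, but it takes a genuinely different route from the paper's. The paper proves boundedness of $Q_m$ at the two endpoints directly: for $p=\infty$ and $p=1$ it estimates the kernel integral $\inc|e^{z\overline w}-p_m(z\overline w)|e^{-|w|^2/2}\,dA(w)$ using the key estimate (Theorem~\ref{3}), and then obtains all intermediate exponents from the interpolation identity $[L^{p_0}_m,L^{p_1}_m]_\theta=L^p_m$; the projection property is dispatched in one line via the reproducing property of $K_m$. You instead exhibit the structural identity $z^mQ_mf=Pg-R_mg$ with $g(w)=w^mf(w)$, where $P$ is the classical Fock projection and $R_m$ is a rank-$m$ operator built from the moments $\frac1\pi\inc\overline w^kg(w)e^{-|w|^2}\,dA(w)$, $0\le k\le m-1$; this reduces everything to the boundedness of $P$ on $L^p_*$ (which the paper quotes from \cite{JPR}) plus H\"older bounds on finitely many linear functionals, and it treats all $p\in[1,\infty]$ simultaneously with no interpolation and no use of Theorem~\ref{3}. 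Your verification that the apparent pole at the origin is removable, and your explicit check that $R_mg=0$ and $Pg=g$ when $g=z^mf$ with $f\in F^{p,m}$, are both sound and in fact make the ``projection onto'' assertion more transparent than the paper's one-sentence appeal to the reproducing property. The trade-off: the paper's argument is self-contained modulo its own Theorem~\ref{3} and rehearses exactly the kernel estimates reused in the duality theorems (Theorems~\ref{13} and \ref{14}), whereas your argument outsources the hard analysis to the known mapping properties of $P$ but reveals $Q_m$ (after the identification $f\mapsto z^mf$) as a finite-rank perturbation of $P$ --- a cleaner conceptual picture. Both arguments rely on one imported fact: yours on the $L^p_*$-boundedness of $P$, the paper's on Stein--Weiss interpolation of the weighted spaces $L^p_m$.
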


\begin{proof}
It suffices to show that the linear operator $Q_m$ is bounded on $L^p_m$ for 
$1\le p\le\infty$. The reproducing property of $K_m$ then shows that $Q_m$ is a projection
from $L^p_m$ onto $F^{p,m}$.

We first consider the case when $p=\infty$. If $f\in L^\infty_m$, then
\begin{eqnarray*}
|Q_mf(z)|&\le&\frac1{m!\pi}\inc|f(w)|\frac{|e^{z\overline w}-p_m(z\overline w)|}{|zw|^m}
e^{-|w|^2}|w|^{2m}\,dA(w)\\
&\le&\frac1{m!\pi|z|^m}\|f\|_{\infty,m}\inc|e^{z\overline w}-p_m(z\overline w)|
e^{-|w|^2/2}\,dA(w).
\end{eqnarray*}
By Theorem \ref{3}, there exists a constant $C>0$, independent of $z$ and $f$, such that
$$|z^mQ_mf(z)|\le C\|f\|_{\infty,m}e^{|z|^2/2},\qquad z\in\C.$$
This shows that $\|Q_mf\|_{\infty,m}\le C\|f\|_{\infty,m}$ for all $f\in L^\infty_m$.
Thus $Q_m$ is bounded on $L^\infty_m$.

We next consider the case when $p=1$. So let $f\in L^1_m$ and consider the integral
$$I=m!\pi\inc|z^mQ_mf(z)e^{-|z|^2/2}|\,dA(z).$$
By Fubini's theorem,
\begin{eqnarray*}
I&\le&\inc|z|^me^{-|z|^2/2}\,dA(z)\inc\frac{|f(w)||e^{z\overline w}-
p_m(z\overline w)|}{|zw|^m}e^{-|w|^2}|w|^{2m}\,dA(w)\\
&=&\inc|w^mf(w)|e^{-|w|^2}\,dA(w)\inc|e^{z\overline w}-
p_m(z\overline w)|e^{-|z|^2/2}\,dA(z).
\end{eqnarray*}
According to Theorem \ref{3} again, there exists a positive constant $C$, independent 
of $f$, such that
$$I\le C\inc|w^mf(w)|e^{-|w|^2/2}\,dA(w).$$
This shows that $Q_m$ is bounded on $L^1_m$.

It is well known that the scale of spaces $L^p_m$ interpolate in the usual way:
$$[L^{p_0}_m,L^{p_1}_m]_\theta=L^p_m,$$
where $1\le p_0<p_1\le\infty$, $\theta\in(0,1)$, and
$$\frac1p=\frac{1-\theta}{p_0}+\frac{\theta}{p_1}.$$
See \cite{SW} for example. Therefore, the boundedness of the linear operator $Q_m$ 
on $L^\infty_m$ and $L^1_m$ imply the boundedness of $Q_m$ on $L^p_m$ for 
all $1\le p\le\infty$.
\end{proof}

\begin{thm}
Suppose $1\le p<\infty$ and $1/p+1/q=1$. Then $(F^{p,m})^*$, the Banach dual of $F^{p,m}$,
can be identified with $F^{q,m}$ under the integral pairing in (\ref{eq6}).
\label{13}
\end{thm}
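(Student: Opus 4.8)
The plan is to run the standard function-space duality argument, using the bounded projection $Q_m$ from Theorem~\ref{12} as the bridge between $L^p_m$ and its closed subspace $F^{p,m}$.

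First I would dispose of the easy direction: each $g\in F^{q,m}$ induces a bounded linear functional on $F^{p,m}$ via the pairing in (\ref{eq6}). Splitting the weight as $|z|^{2m}e^{-|z|^2}=(|z|^me^{-|z|^2/2})^2$ and applying H\"older's inequality gives
$$|\langle f,g\rangle_m|\le C\inc\left|z^mf(z)e^{-\tfrac12|z|^2}\right|\,\left|z^mg(z)e^{-\tfrac12|z|^2}\right|\,dA(z)\le C\|f\|_{p,m}\|g\|_{q,m},$$
so the map $g\mapsto\langle\cdot,g\rangle_m$ sends $F^{q,m}$ boundedly into $(F^{p,m})^*$; injectivity follows since $\langle z^n,g\rangle_m=0$ for all $n$ forces $g=0$.

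For the surjectivity direction I would first identify $(L^p_m)^*$ with $L^q_m$ under the very same pairing (\ref{eq6}). The key observation is that the isometry $U_pf=(c(m,p))^{1/p}z^mf(z)e^{-|z|^2/2}$ carries $L^p_m$ isometrically onto $L^p(\C,dA)$, and a direct computation shows it transforms (\ref{eq6}) into a constant multiple of the flat pairing $\inc F\overline G\,dA$. Hence the ordinary $L^p(dA)$--$L^q(dA)$ duality transfers to yield $(L^p_m)^*=L^q_m$ via (\ref{eq6}) for all $1\le p<\infty$ (including $p=1$, $q=\infty$). Given $\Lambda\in(F^{p,m})^*$, I would compose with the projection to form $\Lambda\circ Q_m\in(L^p_m)^*$, which is an extension of $\Lambda$ since $Q_mf=f$ on $F^{p,m}$, and obtain $h\in L^q_m$ with $(\Lambda\circ Q_m)(f)=\langle f,h\rangle_m$ for all $f\in L^p_m$. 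Setting $g=Q_mh$, which lies in $F^{q,m}$ because $Q_m$ is bounded on $L^q_m$ by Theorem~\ref{12}, and using that $Q_m$ is self-adjoint for the pairing (\ref{eq6}) together with $Q_mf=f$ on $F^{p,m}$, I would conclude $\Lambda(f)=\langle Q_mf,h\rangle_m=\langle f,Q_mh\rangle_m=\langle f,g\rangle_m$. The estimate $\|g\|_{q,m}=\|Q_mh\|_{q,m}\le\|Q_m\|\,\|\Lambda\|$ then gives the reverse norm bound, so the identification holds with equivalent norms.

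The main obstacle I anticipate is reconciling the two different weights in play: the norm on $F^{p,m}$ uses the $p$-dependent weight $|z|^{mp}e^{-p|z|^2/2}$, whereas the pairing (\ref{eq6}) uses the fixed weight $|z|^{2m}e^{-|z|^2}$. One must verify that (\ref{eq6}) genuinely implements the $L^p_m$--$L^q_m$ duality despite this mismatch, and the isometry $U_p$ is precisely what makes this work, since it collapses both weights to the flat measure $dA$ simultaneously. A secondary technical point is the self-adjointness $\langle Q_mf,h\rangle_m=\langle f,Q_mh\rangle_m$, which I would derive from a Fubini argument exploiting the Hermitian symmetry $K_m(z,w)=\overline{K_m(w,z)}$ of the reproducing kernel in Proposition~\ref{8}, with absolute convergence justified by the H\"older bound already established for the pairing.
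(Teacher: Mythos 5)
Your proposal is correct and follows essentially the same route as the paper: reduce to the classical $L^p_m$--$L^q_m$ duality under the pairing (\ref{eq6}) and pull the representing function back into $F^{q,m}$ via the bounded projection $Q_m$ of Theorem~\ref{12}, using the self-adjointness of $Q_m$ with respect to that pairing. The only deviation is that you extend the functional from $F^{p,m}$ to $L^p_m$ by pre-composing with $Q_m$, where the paper invokes the Hahn--Banach theorem; your variant is harmless (indeed slightly more constructive, giving an explicit extension and norm bound), and your added details --- the isometry $U_p$ explaining why (\ref{eq6}) implements the $L^p_m$ duality, and the Fubini/Hermitian-kernel justification of self-adjointness --- only make explicit what the paper leaves implicit.
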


\begin{proof}
That every function in $F^{q,m}$ induces a bounded linear functional on $F^{p,m}$ via
the integral pairing in (\ref{eq6}) follows from the Cauchy-Schwarz inequality.

On the other hand, if $F$ is a bounded linear functional on $F^{p,m}$, then according to
the Hahn-Banach extension theorem, $F$ can be extended (without increasing its norm)
to a bounded linear functional on $L_m^p$. By the usual duality of $L_m^p$ spaces,
there exists some $h\in L_m^q$ such that
$$F(f)=\langle f, h\rangle_m,\qquad f\in F^{p,m}.$$
By Theorem \ref{12}, $Q_m$ is a bounded projection from $L^p_m$ onto $F^{p,m}$.
So if we let $g=Q_m(h)$, then $g\in F^{p,m}$ and
$$F(f)=\langle f, h\rangle_m=\langle Q_m(f), h\rangle_m
=\langle f, Q_m(h)\rangle_m=\langle f, g\rangle_m$$
for all $f\in F^{p,m}$. This completes the proof of the theorem.
\end{proof}

A similar result holds for $0<p<1$.

\begin{thm}
Suppose $0<p<1$ and $m$ is a positive integer. Then the dual space of $F^{p,m}$ can be
identified with $F^{\infty,m}$ under the integral pairing in (\ref{eq6}).
\label{14}
\end{thm}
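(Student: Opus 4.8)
The plan is to prove the two inclusions separately, the hard direction being the surjectivity of the map $g\mapsto\langle\cdot,g\rangle_m$ onto $(F^{p,m})^*$. The obstruction, compared with the situation of Theorem~\ref{13}, is that for $0<p<1$ the space $L^p_m$ is not locally convex, so the Hahn--Banach extension argument used there is unavailable; the representing function must instead be produced constructively from the reproducing kernel of Proposition~\ref{8}.

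\emph{Every $g\in F^{\infty,m}$ gives a bounded functional.} For $f\in F^{p,m}$ I would write the integrand in (\ref{eq6}) as $|f(z)\overline{g(z)}|e^{-|z|^2}|z|^{2m}=(|z^mf(z)|e^{-|z|^2/2})(|z^mg(z)|e^{-|z|^2/2})$ and bound the second factor by $\|g\|_{\infty,m}$. This reduces matters to controlling $\inc|z^mf(z)|e^{-|z|^2/2}\,dA(z)$ by a constant times $\|f\|_{p,m}$, which is exactly Lemma~\ref{4} applied with $\lambda=1$ to the entire function $h(z)=z^mf(z)$. Hence the pairing converges absolutely and $|\langle f,g\rangle_m|\le C\|g\|_{\infty,m}\|f\|_{p,m}$.

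\emph{Construction of the representing function.} Given $F\in(F^{p,m})^*$, write $K_{m,w}(z)=K_m(z,w)$ and set $g(w)=\overline{F(K_{m,w})}$. First I must check $K_{m,w}\in F^{p,m}$ and estimate its norm: since $z^mK_m(z,w)=(m!/\overline w^{\,m})[e^{z\overline w}-p_m(z\overline w)]$, Theorem~\ref{3} (with $a=p/2$ and $b=0$) gives $\|K_{m,w}\|_{p,m}\le C|w|^{-m}e^{|w|^2/2}$ for $|w|$ bounded away from $0$. Therefore $|w^mg(w)|e^{-|w|^2/2}\le\|F\|\,|w|^m e^{-|w|^2/2}\|K_{m,w}\|_{p,m}\le C\|F\|$ for large $|w|$, while for bounded $w$ the quantity is controlled by continuity of the entire function $g$; thus $g\in F^{\infty,m}$ with $\|g\|_{\infty,m}\le C\|F\|$.

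\emph{Verifying the representation.} It remains to show $F(f)=\langle f,g\rangle_m$ for all $f$. Expanding $K_{m,w}=\sum_n\frac{m!}{(n+m)!}\overline w^{\,n}z^n$, subadditivity of the $p$-homogeneous quantity $\|\cdot\|_{p,m}^p$ together with the monomial norms $\|z^n\|_{p,m}$ (computed in polar coordinates and controlled by Stirling) shows this series converges in $F^{p,m}$; the same estimate shows Taylor partial sums of any $f$ converge in $F^{p,m}$, so polynomials are dense. Applying the continuous functionals $F$ and $\Phi_g:=\langle\cdot,g\rangle_m$ to the convergent kernel expansion and matching coefficients of $\overline w^{\,n}$ will yield $F(z^n)=\langle z^n,g\rangle_m$ for every $n$, provided the two functionals already agree on each $K_{m,w}$. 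That agreement is the crux: by construction $F(K_{m,w})=\overline{g(w)}$, while the reproducing identity $\langle K_{m,w},g\rangle_m=\overline{g(w)}$ follows because $Q_m$ is the identity on $F^{\infty,m}$ by Theorem~\ref{12}, so $\langle g,K_{m,w}\rangle_m=Q_mg(w)=g(w)$ after using $\overline{K_m(w,z)}=K_m(z,w)$. Matching coefficients then propagates equality from kernels to monomials, and density of polynomials finishes the proof. I expect this last step --- transferring the reproducing property, valid a priori only on $F^{2,m}$, to the merely bounded functions in $F^{\infty,m}$ --- to be the main obstacle, and it is precisely what the boundedness of $Q_m$ on $L^\infty_m$ supplies.
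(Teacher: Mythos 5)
Your first two steps coincide with the paper's own proof: boundedness of $\Phi_g=\langle\cdot,g\rangle_m$ for $g\in F^{\infty,m}$ via Lemma~\ref{4} applied to $z^mf(z)$, and the construction $g(w)=\overline{F(K_m(\cdot,w))}$ together with the Theorem~\ref{3} estimate $\|K_m(\cdot,w)\|_{p,m}\le C|w|^{-m}e^{|w|^2/2}$, giving $g\in F^{\infty,m}$ with $\|g\|_{\infty,m}\le C\|F\|$. Where you diverge is in verifying $F(f)=\langle f,g\rangle_m$: the paper gets this by applying $F$ under the integral sign of the reproducing formula $f=Q_mf$, an interchange it never justifies and which is genuinely delicate for $0<p<1$, since the integral cannot be read as a norm-convergent vector-valued integral in the non-locally-convex space $F^{p,m}$. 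Your substitute --- agreement of $F$ and $\Phi_g$ on every kernel $K_{m,w}$ (using $Q_mg=g$ on $F^{\infty,m}$, which Theorem~\ref{12} supplies), then matching coefficients of $\overline w^{\,n}$ in the norm-convergent expansion $K_{m,w}=\sum_n\frac{m!}{(n+m)!}\overline w^{\,n}z^n$ to get $F(z^n)=\Phi_g(z^n)$, then density of polynomials --- is a legitimate route, and in this respect more careful than the paper's.

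However, there is one genuine gap: the claim that ``the same estimate shows Taylor partial sums of any $f$ converge in $F^{p,m}$,'' which is what you use to get density of polynomials. The termwise estimate works for the kernel only because its coefficients decay like $1/(n+m)!$: since $\|z^n\|_{p,m}\asymp\sqrt{(n+m)!}$ up to factors polynomial in $n$ (polar coordinates plus Stirling), the $n$-th term of the kernel expansion has norm about $m!\,|w|^n/\sqrt{(n+m)!}$, whose $p$-th powers are summable. For a general $f\in F^{p,m}$ with Taylor coefficients $a_n$, the only bound the norm provides is $|a_n|\le C\|f\|_{p,m}/\|z^n\|_{p,m}$, whence $\|a_nz^n\|_{p,m}\le C\|f\|_{p,m}$: bounded, not $p$-power summable. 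Indeed $\sum_n|a_n|^p\|z^n\|_{p,m}^p$ can be infinite for $f\in F^{p,m}$: already for $f(z)=e^{z\overline a}$ this sum exceeds $\|f\|_{p,m}^p$ by a factor that tends to infinity with $|a|$, and a gliding-hump superposition $\sum_kc_ke^{z\overline{a_k}}$ with suitably sparse $a_k>0$ makes the sum diverge while $f$ stays in $F^{p,m}$. So no termwise application of $p$-subadditivity can yield convergence of the partial sums of a general $f$. The statement you actually need --- polynomials are dense in $F^{p,m}$ for $0<p<1$ --- is true, but it requires the dilation argument: first show $\|f_r-f\|_{p,m}\to0$ as $r\to1^-$, where $f_r(z)=f(rz)$, and then observe that your termwise estimate does apply to $f_r$, whose coefficients carry the geometric damping $r^n$. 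With that lemma inserted, your proof is complete.
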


\begin{proof}
First suppose that $g\in F^{\infty,m}$ and
$$F(f)=\inc f(z)\overline{g(z)}\,e^{-|z|^2}|z|^{2m}\,dA(z),\qquad f\in F^{p,m}.$$
Then
$$|F(f)|\le\|g\|_{\infty,m}\inc|z^mf(z)e^{-|z|^2/2}|\,dA(z),$$
and an application of Lemma~\ref{4} to the function $z^mf(z)$ yields
$$|F(f)|\le C\|g\|_{\infty,m}\|f\|_{p,m},\qquad f\in F^{p,m},$$
where $C$ is a positive constant independent of $f$. This shows that $F$ defines
a bounded linear functional on $F^{p,m}$.

Conversely, if $F$ is a bounded linear functional on $F^{p,m}$, then for any
$f\in F^{p,m}$ we deduce from the reproducing formula
$$f(z)=\frac1{m!\pi}\inc f(w)K_m(z,w)e^{-|w|^2}|w|^{2m}\,dA(w)$$
that
$$F(f)=\frac1{m!\pi}\inc f(w)\overline{g(w)}\,e^{-|w|^2}|w|^{2m}\,dA(w).$$
where
$$g(w)=\overline{F(K_m(\,\cdot\,,w))}.$$
It is clear that $g$ is entire and
\begin{eqnarray*}
|g(w)|&\le&\|F\|\|K_m(\,\cdot\,,w)\|_{p,m}\\
&=&\|F\|\left[\inc\left|\frac{e^{z\overline w}-p_m(z\overline w)}{(z\overline w)^m}\,
z^me^{-\frac12|z|^2}\right|^p\,dA(z)\right]^{\frac1p}\\
&=&\frac{\|F\|}{|w|^m}\left[\inc|e^{z\overline w}-p_m(z\overline w)|^p
e^{-\frac p2|z|^2}\,dA(z)\right]^{\frac1p}.
\end{eqnarray*}
According to Theorem~\ref{3}, there is a positive constant $C$ such that
$$|w^mg(w)|\le C\left[e^{\frac p2|w|^2}\right]^{\frac1p},\qquad w\in\C.$$
This shows that $g\in F^{\infty,m}$ and completes the proof of the theorem.
\end{proof}

\begin{thm}
Suppose $1\le p_0<p_1\le\infty$, $\theta\in(0,1)$, and
$$\frac1p=\frac{1-\theta}{p_0}+\frac\theta{p_1}.$$
Then $[F^{p_0,m},F^{p_1,m}]_\theta$, the complex interpolation space between $F^{p_0,m}$
and $F^{p_1,m}$, can be identified with $F^{p,m}$.
\label{15}
\end{thm}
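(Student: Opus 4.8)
The plan is to realize $F^{p,m}$ as the range of a single bounded projection acting on the whole $L^p_m$ scale, and then to transport the known interpolation of that scale through the projection. The engine is the standard functorial principle of complex interpolation: if one and the same linear map $Q$ is a bounded projection on each member of a compatible couple $(X_0,X_1)$, then $Q$ extends to a bounded projection on every intermediate space $[X_0,X_1]_\theta$, and the interpolation space of the ranges is precisely the range of $Q$ on the interpolated space, with equivalent norms. Equivalently, a complemented subspace of a couple --- with a common complementing projection --- interpolates to the correspondingly complemented subspace of the interpolated space. This is the retract theorem of complex interpolation, and it is exactly the right tool here because of Theorem~\ref{12}.

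First I would set up the couple. Both $L^{p_0}_m$ and $L^{p_1}_m$ embed continuously into the space of measurable functions modulo null sets, so $(L^{p_0}_m,L^{p_1}_m)$ is a compatible couple, and by the proof of Theorem~\ref{12} its complex interpolation space is
$$[L^{p_0}_m,L^{p_1}_m]_\theta = L^p_m,\qquad \frac1p=\frac{1-\theta}{p_0}+\frac\theta{p_1}.$$
Next I would invoke Theorem~\ref{12}: since $1\le p_0<p_1\le\infty$, both indices lie in the range where the single integral operator $Q_m$ of (\ref{eq7}) is a bounded projection of $L^{p_j}_m$ onto $F^{p_j,m}$, $j=0,1$. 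Because $Q_m$ is given by one fixed formula on the entire couple, it is an admissible operator for the interpolation functor.

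Applying the complemented-subspace principle then yields the chain
$$[F^{p_0,m},F^{p_1,m}]_\theta = [Q_mL^{p_0}_m,Q_mL^{p_1}_m]_\theta = Q_m\,[L^{p_0}_m,L^{p_1}_m]_\theta = Q_m L^p_m = F^{p,m},$$
where the last equality is again Theorem~\ref{12} applied at the index $p\in(p_0,p_1)\subset[1,\infty]$. No difficulty arises at the endpoint $p_1=\infty$, since both Theorem~\ref{12} and the interpolation of the $L^p_m$ scale recorded above include that case.

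The one delicate point --- and the main thing to verify, rather than a genuine obstacle --- is the identification of norms. The interpolation construction a priori equips $F^{p,m}$ only with the abstract interpolation norm, and one must confirm it is comparable to the intrinsic norm $\|\cdot\|_{p,m}$ of (\ref{eq3}). This is automatic from the retract structure: the inclusion $F^{p,m}\hookrightarrow L^p_m$ is isometric and $Q_m$ restricts to the identity on $F^{p,m}$, so under this inclusion $[F^{p_0,m},F^{p_1,m}]_\theta$ is carried isomorphically onto the range of the bounded projection $Q_m$ in $L^p_m$, namely $F^{p,m}$ equipped with the restriction of the $L^p_m$ norm, which is exactly $\|\cdot\|_{p,m}$; the equivalence constants are controlled by $\|Q_m\|$ and the norm of the inclusion. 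This completes the identification.
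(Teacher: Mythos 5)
Your proposal is correct and takes essentially the same approach as the paper: the paper's own proof is precisely the retract/complemented-subspace argument written out by hand, applying the single projection $Q_m$ to an interpolating family for the couple $(L^{p_0}_m,L^{p_1}_m)$ and using exactly your two inputs, namely Theorem~\ref{12} and $[L^{p_0}_m,L^{p_1}_m]_\theta=L^p_m$. The only difference is that you invoke the abstract retract theorem as a black box, while the paper proves that principle inline in this special case.
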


\begin{proof}
That $[F^{p_0,m},F^{p_1,m}]_\theta\subset F^{p,m}$ follows from the definition of
complex interpolation, the fact that each $F^{p_k,m}$ is a closed subspace of
$L^{p_k}_m$, and the fact that $[L^{p_0}_m,L^{p_1}_m]_\theta=L^p_m$.

Conversely, if $f\in F^{p,m}\subset L^p_m$, then it follows from the fact that
$[L^{p_0}_m,L^{p_1}_m]_\theta=L^p_m$ there exists a function $F(z,\zeta)$, where 
$z\in\C$ and $0\le\re(\zeta)\le1$, and a positive constant $C$, such that
\begin{enumerate}
\item[(a)] $F(z,\theta)=f(z)$ for all $z\in\C$.
\item[(b)] $\|F(\,\cdot\,,\zeta)\|_{p_0,m}\le C$ for all $\re(\zeta)=0$.
\item[(c)] $\|F(\,\cdot\,,\zeta)\|_{p_1,m}\le C$ for all $\re(\zeta)=1$.
\end{enumerate}
Let $G(z,\zeta)=Q_mF(z,\zeta)$, where the projection $Q_m$ is applied with respect to
the variable $z$. Then for any fixed $\zeta$ the function $G(z,\zeta)$ is entire in
$z$, and the boundedness of the projection $Q_m$ on $L^{p_k}_m$ shows
that there is another positive constant $C$ such that
\begin{enumerate}
\item[(a)] $G(z,\theta)=f(z)$ for all $z\in\C$.
\item[(b)] $\|G(\,\cdot\,,\zeta)\|_{p_0,m}\le C$ for all $\re(\zeta)=0$.
\item[(c)] $\|G(\,\cdot\,,\zeta)\|_{p_0,m}\le C$ for all $\re(\zeta)=1$.
\end{enumerate}
This shows that $f\in[F^{p_0,m},F^{p_1,m}]_\theta$, and completes the proof of 
the theorem.
\end{proof}


\begin{thebibliography}{99}

\bibitem{Ba} V. Bargmann, On a Hilbert space of analytic functions and an 
associated integral transform, Part I, {\it Commun. Pure Appl. Math.} {\bf 14} (1961), 
187--214.

\bibitem{BC1} C. Berger and L. Coburn, Toeplitz operators and quantum mechanics, 
{\it J. Funct. Anal.} {\bf 68} (1986), 273-299.

\bibitem{BC2} C. Berger and L. Coburn, Toeplitz operators on the Segal-Bargmann space, 
{\it Trans. Amer. Math. Soc.} {\bf 301} (1987), 813-829.

\bibitem{BC3} C. Berger and L. Coburn, Heat flow and Berezin-Toeplitz estimates, 
{\it Amer. J. Math.} {\bf 116} (1994), 563-590.

\bibitem{Fi} E. Fischer, \"Uber die differentiatensprozesse der algebra,
{\it J. Math.} {\bf 148} (1917), 1--78.

\bibitem{Fo} V. Fock, Verallgemeinerung and l\"osung der Diracshen 
statistischengleichung, {\it Z. Phys.} {\bf 49} (1928), 339--357.

\bibitem{HL} B.C. Hall and W. Lewkeeratiyutkul, Holomorphic Sobolev 
spaces and the generalized Segal-Bargmann transform, {\it J. Funct. Anal.} {\bf 217} 
(2004), 192--220.

\bibitem{IZ} J. Isralowitz and K. Zhu, Toeplitz operators on the Fock space, {\it Integral 
Equations and Operator Theory} {\bf 66} (2010), 593-611.

\bibitem{JPR} S. Janson, J. Peetre and R. Rochberg, Hankel forms and the Fock space, {\it Reivsta Mat. Ibero-Amer.} {\bf 3} (1987), 61--138.

\bibitem{Se1} I. E. Segal, \textit{Mathematical Problems of Relativistic Physics},
American Mathematical Society, Providence, 1963.

\bibitem{Se2} I. E. Segal, The Complex-Wave Representation of the Free 
Boson Field, in \textit{Topics in Functional Analysis: Essays Dedicated to 
M. G. Krein on the Occasion of His 70th Birthday}, edited by I. Gohberg and M. Kac
(Academic, New York, 1978), pp. 321--343, Advances in Mathematics, Supplementary 
Studies, Vol. 3.

\bibitem{SW} E. Stein and G. Weiss, Interpolation of operators with change
of measures, {\it Trans. Amer. Math. Soc.} {\bf 87} (1958), 159-172.

\bibitem{Th} S. Thangavelu, Holomorphic Sobolev spaces associated to 
compact symmetric spaces, {\it J. Funct. Anal.} {\bf 251} (2007), 438--462.

\bibitem{Tu} Y. Tung, {\it Fock Spaces}, Ph.D. dissertation, University of Michigan, 2005.

\bibitem{Zhu} K. Zhu, \textit{Analysis on Fock Spaces}, in preparation.

\end{thebibliography}
\end{document}